\documentclass{amsart}
%%%%%%%%%%%%%%%%%%%%%%%%%%%%%%%%%%%%%%%%%%%%%%%%%%%%%%%%%%%%%%%%%%%%%%%%%%%%%%%%%%%%%%%%%%%%%%%%%%%%%%%%%%%%%%%%%%%%%%%%%%%%%%%%%%%%%%%%%%%%%%%%%%%%%%%%%%%%%%%%%%%%%%%%%%%%%%%%%%%%%%%%%%%%%%%%%%%%%%%%%%%%%%%%%%%%%%%%%%%%%%%%%%%%%%%%%%%%%%%%%%%%%%%%%%%%
\usepackage{amsmath}
\usepackage{amsfonts}

\setcounter{MaxMatrixCols}{10}
%TCIDATA{OutputFilter=LATEX.DLL}
%TCIDATA{Version=5.50.0.2960}
%TCIDATA{<META NAME="SaveForMode" CONTENT="1">}
%TCIDATA{BibliographyScheme=Manual}
%TCIDATA{Created=Tuesday, July 24, 2012 20:31:46}
%TCIDATA{LastRevised=Thursday, July 11, 2013 23:18:36}
%TCIDATA{<META NAME="GraphicsSave" CONTENT="32">}
%TCIDATA{<META NAME="DocumentShell" CONTENT="Articles\SW\AMS Journal Article">}
%TCIDATA{CSTFile=amsartci.cst}

\newtheorem{theorem}{Theorem}
\theoremstyle{plain}

\newtheorem{corollary}{Corollary}

\newtheorem{definition}{Definition}

\newtheorem{lemma}{Lemma}

\numberwithin{equation}{section}
\input{tcilatex}

\begin{document}
\title[New general integral inequalities]{New general integral inequalities
for some GA-convex and quasi-geometrically convex functions via fractional
integrals}
\author{\.{I}mdat \.{I}\c{s}can}
\address{Department of Mathematics, Faculty of Sciences and Arts, Giresun
University, Giresun, Turkey}
\email{imdat.iscan@giresun.edu.tr}
\subjclass[2000]{ 26A51, 26A33, 26D10, 26D15. }
\keywords{Hermite--Hadamard type inequality, Ostrowski type inequality,
Simpson type inequality, GA-$s$-convex function, quasi-geometrically convex
function, $(s,m)$-GA-convex function.}

\begin{abstract}
In this paper, the author introduces the concept of the quasi-geometrically
convex and defines a new identity for fractional integrals. By using of this
identity, author obtains new estimates on generalization of Hadamard,
Ostrowski and Simpson type inequalities for GA-$s$-convex,
quasi-geometrically convex and $(s,m)$-GA-convex functions via Riemann
Liouville fractional integral.
\end{abstract}

\maketitle

\section{Introduction}

Let real function $f$ be defined on some nonempty interval $I$ of real line $%
%TCIMACRO{\U{211d} }%
%BeginExpansion
\mathbb{R}
%EndExpansion
$. The function $f$ is said to be convex on $I$ if inequality%
\begin{equation}
f(tx+(1-t)y)\leq tf(x)+(1-t)f(y)  \label{1-0}
\end{equation}%
holds for all $x,y\in I$ and $t\in \left[ 0,1\right] .$

Following inequalities are well known in the literature as Hermite-Hadamard
inequality, Ostrowski inequality and Simpson inequality respectively:

\begin{theorem}
Let $f:I\subseteq \mathbb{R\rightarrow R}$ be a convex function defined on
the interval $I$ of real numbers and $a,b\in I$ with $a<b$. The following
double inequality holds%
\begin{equation}
f\left( \frac{a+b}{2}\right) \leq \frac{1}{b-a}\dint\limits_{a}^{b}f(x)dx%
\leq \frac{f(a)+f(b)}{2}\text{.}  \label{1-1}
\end{equation}
\end{theorem}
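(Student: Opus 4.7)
The plan is to prove the two inequalities in (\ref{1-1}) separately, each by applying the convexity condition (\ref{1-0}) and integrating over $t\in[0,1]$.

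For the right-hand inequality, I would parametrize the interval $[a,b]$ affinely by $x=ta+(1-t)b$, $t\in[0,1]$. Applying (\ref{1-0}) directly at this point gives
\begin{equation*}
f(ta+(1-t)b)\leq tf(a)+(1-t)f(b).
\end{equation*}
Integrating both sides in $t$ over $[0,1]$, the right-hand side evaluates to $\frac{f(a)+f(b)}{2}$, while the change of variables $x=ta+(1-t)b$, $dx=-(b-a)\,dt$, converts the left-hand side into $\frac{1}{b-a}\int_a^b f(x)\,dx$. This yields the upper bound.

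For the left-hand inequality, I would exploit a symmetry trick: set $x=ta+(1-t)b$ and $y=(1-t)a+tb$, so that $\tfrac{x+y}{2}=\tfrac{a+b}{2}$ independently of $t$. Midpoint convexity (a consequence of (\ref{1-0}) with the weight $\tfrac12$) then gives
\begin{equation*}
f\!\left(\tfrac{a+b}{2}\right)\leq \tfrac{1}{2}\bigl[f(ta+(1-t)b)+f((1-t)a+tb)\bigr].
\end{equation*}
Integrating over $t\in[0,1]$ and applying the appropriate change of variables to each summand, both integrals on the right equal $\frac{1}{b-a}\int_a^b f(x)\,dx$, producing the lower bound.

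Since this is a classical inequality, no serious technical obstacle is anticipated; the only point requiring care is the symmetrization step for the left inequality, where one must notice that pairing $t$ with $1-t$ makes the midpoint of the two parametrized points independent of $t$ and equal to $\tfrac{a+b}{2}$. Everything else reduces to elementary integration and a linear change of variables.
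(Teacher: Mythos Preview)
Your argument is correct and is the standard proof of the Hermite--Hadamard inequality. Note, however, that the paper does not actually supply a proof of this statement: it is quoted in the Introduction as a well-known classical result, alongside the Ostrowski and Simpson inequalities, to motivate the new results that follow. There is therefore no ``paper's own proof'' to compare against.

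That said, the technique you use---pairing $t$ with $1-t$ so that the midpoint is constant, applying midpoint convexity for the lower bound, applying the defining inequality of convexity directly for the upper bound, and then integrating over $t\in[0,1]$---is exactly the template the paper adopts when it \emph{does} prove its own fractional GA-convex analogue (the theorem giving inequality~(\ref{2-0})). There the symmetric pair is $x=a^{t}b^{1-t}$, $y=b^{t}a^{1-t}$ with constant geometric mean $\sqrt{ab}$, and the integration is against the weight $t^{\alpha-1}$ rather than $1$. So your proposal is entirely in the spirit of the paper's methods.
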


\begin{theorem}
Let $f:I\subseteq \mathbb{R\rightarrow R}$ be a mapping differentiable in $%
I^{\circ },$ the interior of I, and let $a,b\in I^{\circ }$ with $a<b.$ If $%
\left\vert f^{\prime }(x)\right\vert \leq M,$ $x\in \left[ a,b\right] ,$
then we the following inequality holds%
\begin{equation*}
\left\vert f(x)-\frac{1}{b-a}\dint\limits_{a}^{b}f(t)dt\right\vert \leq 
\frac{M}{b-a}\left[ \frac{\left( x-a\right) ^{2}+\left( b-x\right) ^{2}}{2}%
\right]
\end{equation*}%
for all $x\in \left[ a,b\right] .$ The constant $\frac{1}{4}$ is the best
possible in the sense that it cannot be replaced by a smaller one.
\end{theorem}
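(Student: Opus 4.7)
The plan is to derive the inequality from a Peano-kernel identity for the deviation $f(x)-\frac{1}{b-a}\int_a^b f(t)\,dt$, and then apply the pointwise hypothesis $|f'|\le M$. For the sharpness claim, the implicit constant $\tfrac{1}{4}$ will arise by minimizing the upper bound over $x\in[a,b]$ at the midpoint, and I will verify it is optimal by testing a tent-shaped extremal.

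First, for every $t\in[a,b]$ the fundamental theorem of calculus gives $f(x)-f(t)=\int_t^x f'(s)\,ds$. Averaging in $t$ yields
\begin{equation*}
f(x)-\frac{1}{b-a}\int_a^b f(t)\,dt=\frac{1}{b-a}\int_a^b\left(\int_t^x f'(s)\,ds\right)dt,
\end{equation*}
and either Fubini's theorem or integration by parts on $[a,x]$ and $[x,b]$ rewrites the right side as
\begin{equation*}
\frac{1}{b-a}\int_a^b p(x,t)\,f'(t)\,dt,\qquad p(x,t)=\begin{cases}t-a, & a\le t\le x,\\ t-b, & x<t\le b.\end{cases}
\end{equation*}

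Next, taking absolute values and using $|f'(t)|\le M$ yields
\begin{equation*}
\left|f(x)-\frac{1}{b-a}\int_a^b f(t)\,dt\right|\le\frac{M}{b-a}\left[\int_a^x(t-a)\,dt+\int_x^b(b-t)\,dt\right],
\end{equation*}
and the two elementary integrals evaluate to $\tfrac{1}{2}(x-a)^2$ and $\tfrac{1}{2}(b-x)^2$, giving the stated bound.

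For the sharpness, note that the right-hand side can be rewritten as $\left[\tfrac{1}{4}+\bigl(\tfrac{x-(a+b)/2}{b-a}\bigr)^2\right]M(b-a)$, whose minimum over $x$ equals $\tfrac{1}{4}M(b-a)$ and is attained at $x=\tfrac{a+b}{2}$. To show $\tfrac{1}{4}$ cannot be decreased, I would evaluate both sides at $x=\tfrac{a+b}{2}$ on a $C^{1}$ mollification of the tent function $f(t)=|t-\tfrac{a+b}{2}|$, which satisfies $|f'|=1$ almost everywhere, and check that the ratio of the two sides tends to $1$ as the mollification parameter shrinks. The only real obstacle is this smoothing step, since the natural extremizer fails to be differentiable at one point; the rest of the argument is a one-line estimate against the Peano kernel.
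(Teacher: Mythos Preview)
Your argument is the classical Montgomery/Peano-kernel proof of the Ostrowski inequality, and it is correct: the identity
\[
f(x)-\frac{1}{b-a}\int_a^b f(t)\,dt=\frac{1}{b-a}\int_a^b p(x,t)f'(t)\,dt
\]
with the kernel you wrote down is standard, the estimate by $|f'|\le M$ and the evaluation of $\int_a^b|p(x,t)|\,dt$ are routine, and your sharpness discussion via the tent extremal (with mollification to meet the differentiability hypothesis) is the usual way to justify that $\tfrac14$ cannot be improved.

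There is nothing to compare against in the paper itself: this theorem appears in the Introduction as a well-known background result and is \emph{stated without proof}. The paper's original contributions begin in Section~2 with the fractional identity of Lemma~1 and its consequences; the Ostrowski inequality is only quoted to motivate those later ``Ostrowski-type'' corollaries. So your proposal supplies a proof where the paper gives none, and the approach you chose is exactly the standard one found in the literature on this inequality.
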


\begin{theorem}
Let $f:\left[ a,b\right] \mathbb{\rightarrow R}$ be a four times
continuously differentiable mapping on $\left( a,b\right) $ and $\left\Vert
f^{(4)}\right\Vert _{\infty }=\underset{x\in \left( a,b\right) }{\sup }%
\left\vert f^{(4)}(x)\right\vert <\infty .$ Then the following inequality
holds:%
\begin{equation*}
\left\vert \frac{1}{3}\left[ \frac{f(a)+f(b)}{2}+2f\left( \frac{a+b}{2}%
\right) \right] -\frac{1}{b-a}\dint\limits_{a}^{b}f(x)dx\right\vert \leq 
\frac{1}{2880}\left\Vert f^{(4)}\right\Vert _{\infty }\left( b-a\right) ^{4}.
\end{equation*}
\end{theorem}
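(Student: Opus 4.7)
The plan is to introduce an auxiliary function of a single variable that captures Simpson's error on an interval symmetric about the midpoint $c=(a+b)/2$. Setting
\begin{equation*}
E(h)=\frac{h}{3}\bigl[f(c-h)+4f(c)+f(c+h)\bigr]-\int_{c-h}^{c+h}f(x)\,dx,\qquad h\in\bigl[0,\tfrac{b-a}{2}\bigr],
\end{equation*}
the inequality of the theorem (after multiplying through by $b-a$) becomes $\bigl|E\bigl(\tfrac{b-a}{2}\bigr)\bigr|\leq\tfrac{(b-a)^{5}}{2880}\left\Vert f^{(4)}\right\Vert _{\infty}$, since $\tfrac{1}{6}[f(a)+4f(c)+f(b)]=\tfrac{1}{3}\bigl[\tfrac{f(a)+f(b)}{2}+2f(c)\bigr]$.

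The first task is to differentiate $E$ three times and observe the chain of cancellations $E(0)=E'(0)=E''(0)=E'''(0)=0$, culminating in the clean formula $E'''(h)=\tfrac{h}{3}\bigl[f'''(c+h)-f'''(c-h)\bigr]$. The fundamental theorem of calculus then gives $|E'''(h)|\leq\tfrac{2h^{2}}{3}\left\Vert f^{(4)}\right\Vert _{\infty}$. Integrating three times from $0$ to $h$ and using the vanishing initial conditions yields successively the bounds $\tfrac{2h^{3}}{9}$, $\tfrac{h^{4}}{18}$, and $\tfrac{h^{5}}{90}$, each multiplied by $\left\Vert f^{(4)}\right\Vert _{\infty}$. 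Setting $h=(b-a)/2$ produces the factor $\tfrac{1}{90}\cdot\tfrac{1}{32}=\tfrac{1}{2880}$, matching the claimed constant.

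The main obstacle is the careful execution of the first three derivatives of $E$, since at each step one must combine a product-rule term coming from the prefactor $h/3$ with a Leibniz term coming from the varying integration limits, and rely on cancellations to produce the extra power of $h$ needed so that each subsequent integration cleanly drops the bound by one order. An alternative route is the Peano kernel representation: using that Simpson's rule is exact on cubics, the error admits the form $\int_{a}^{b}K(t)f^{(4)}(t)\,dt$ for a kernel $K$ that is sign-definite on each half-interval, whence the error is dominated by $\left\Vert f^{(4)}\right\Vert _{\infty}\int_{a}^{b}|K(t)|\,dt$; this replaces the repeated differentiation with the analogous computation of constructing and integrating $K$, so it is not essentially easier.
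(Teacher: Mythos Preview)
Your proof is correct. The auxiliary function $E(h)$ is well chosen, the derivative computations check out (in particular $E'''(h)=\tfrac{h}{3}\bigl[f'''(c+h)-f'''(c-h)\bigr]$ is right, and all lower derivatives vanish at $h=0$), and the chain of integrations $\tfrac{2h^{2}}{3}\to\tfrac{2h^{3}}{9}\to\tfrac{h^{4}}{18}\to\tfrac{h^{5}}{90}$ followed by the substitution $h=(b-a)/2$ yields exactly the constant $\tfrac{1}{2880}$.

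As for comparison with the paper: there is nothing to compare. The paper states this theorem in the introduction as one of three classical inequalities (Hermite--Hadamard, Ostrowski, Simpson) that motivate the work, and supplies no proof; the new results begin in Section~2. So your argument is not an alternative to the paper's proof but rather a self-contained justification of a quoted background fact.

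One small point of rigor worth making explicit when you write it up: the hypothesis places $f^{(4)}$ only on the open interval $(a,b)$, so $E'''(h)$ is guaranteed to exist only for $h<(b-a)/2$. Your bound $|E(h)|\le \tfrac{h^{5}}{90}\lVert f^{(4)}\rVert_{\infty}$ therefore holds a priori on $[0,\tfrac{b-a}{2})$; since $E$ itself is continuous on $[0,\tfrac{b-a}{2}]$ (it involves only $f$ and its integral), the inequality passes to $h=(b-a)/2$ by continuity. This closes the argument without needing $f'''$ at the endpoints.
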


The following de nitions are well known in the literature.

\begin{definition}[\protect\cite{N00,N03}]
A function $f:I\subseteq \left( 0,\infty \right) \rightarrow 
%TCIMACRO{\U{211d} }%
%BeginExpansion
\mathbb{R}
%EndExpansion
$ is said to be GA-convex (geometric-arithmatically convex) if%
\begin{equation*}
f(x^{t}y^{1-t})\leq tf(x)+\left( 1-t\right) f(y)
\end{equation*}%
for all $x,y\in I$ and $t\in \left[ 0,1\right] $.
\end{definition}

\begin{definition}[\protect\cite{SYQ13}]
For $s\in \left( 0,1\right] ,$ a function $f:I\subseteq \left( 0,\infty
\right) \rightarrow 
%TCIMACRO{\U{211d} }%
%BeginExpansion
\mathbb{R}
%EndExpansion
$ is said to be GA-s-convex (geometric-arithmatically s-convex) if%
\begin{equation*}
f(x^{t}y^{1-t})\leq t^{s}f(x)+\left( 1-t\right) ^{s}f(y)
\end{equation*}%
for all $x,y\in I$ and $t\in \left[ 0,1\right] $.
\end{definition}

It can be easily seen that for $s=1$, GA-s-convexity reduces to GA-convexity.

\begin{definition}[\protect\cite{JZQ13}]
Let $f:(0,b]\rightarrow 
%TCIMACRO{\U{211d} }%
%BeginExpansion
\mathbb{R}
%EndExpansion
,b>0,$ and $\left( s,m\right) \in \left( 0,1\right] ^{2}$. If 
\begin{equation*}
f(x^{t}y^{m\left( 1-t\right) })\leq t^{s}f(x)+m\left( 1-t^{s}\right) f(y)
\end{equation*}%
for all $x,y\in (0,b]$ and $t\in \lbrack 0,1]$, then $f$ is said to be a $%
\left( s,m\right) $-GA-convex function .
\end{definition}

\begin{definition}[\protect\cite{N00,N03}]
A function $f:I\subseteq \left( 0,\infty \right) \rightarrow \left( 0,\infty
\right) $ is said to be GG-convex (called in \cite{ZJQ12} geometrically
convex function) if%
\begin{equation*}
f(x^{t}y^{1-t})\leq f(x)^{t}f(y)^{\left( 1-t\right) }
\end{equation*}%
for all $x,y\in I$ and $t\in \left[ 0,1\right] $.
\end{definition}

We give some necessary definitions and mathematical preliminaries of
fractional calculus theory which are used throughout this paper.

\begin{definition}
Let $f\in L\left[ a,b\right] $. The Riemann-Liouville integrals $%
J_{a^{+}}^{\alpha }f$ and $J_{b^{-}}^{\alpha }f$ of oder $\alpha >0$ with $%
a\geq 0$ are defined by

\begin{equation*}
J_{a^{+}}^{\alpha }f(x)=\frac{1}{\Gamma (\alpha )}\dint\limits_{a}^{x}\left(
x-t\right) ^{\alpha -1}f(t)dt,\ x>a
\end{equation*}

and

\begin{equation*}
J_{b^{-}}^{\alpha }f(x)=\frac{1}{\Gamma (\alpha )}\dint\limits_{x}^{b}\left(
t-x\right) ^{\alpha -1}f(t)dt,\ x<b
\end{equation*}%
respectively, where $\Gamma (\alpha )$ is the Gamma function defined by $%
\Gamma (\alpha )=$ $\dint\limits_{0}^{\infty }e^{-t}t^{\alpha -1}dt$ and $%
J_{a^{+}}^{0}f(x)=J_{b^{-}}^{0}f(x)=f(x).$
\end{definition}

In the case of $\alpha =1$, the fractional integral reduces to the classical
integral. Properties concerning this operator can be found \cite%
{GM97,MR93,P99}.

In recent years, many athors have studied errors estimations for
Hermite-Hadamard, Ostrowski and Simpson inequalities; for refinements,
counterparts, generalization see \cite%
{ADDC10,AKO11,D10,I13,I13b,I13c,OAK12,P11,SA11,S12,SO12,SOS12,SSO10,SSYB11}.

In this paper, new identity for fractional integrals have been defined. By
using of this identity, we obtained a generalization of Hadamard, Ostrowski
and Simpson type inequalities for GA-$s$-convex, quasi-geometrically convex, 
$(s,m)$-convex functions via Riemann Liouville fractional integral.

\ 

\section{Generalized integral inequalities for some GA-convex functions via
fractional integrals}

Let $f:I\subseteq \left( 0,\infty \right) \rightarrow 
%TCIMACRO{\U{211d} }%
%BeginExpansion
\mathbb{R}
%EndExpansion
$ be a differentiable function on $I^{\circ }$, the interior of $I$,
throughout this section we will take%
\begin{eqnarray*}
I_{f}\left( x,\lambda ,\alpha ,a,b\right) &=&\left( 1-\lambda \right) \left[
\ln ^{\alpha }\frac{x}{a}+\ln ^{\alpha }\frac{b}{x}\right] f(x)+\lambda %
\left[ f(a)\ln ^{\alpha }\frac{x}{a}+f(b)\ln ^{\alpha }\frac{b}{x}\right] \\
&&-\Gamma \left( \alpha +1\right) \left[ J_{\left( \ln x\right)
^{-}}^{\alpha }\left( f\circ \exp \right) (\ln a)+J_{\left( \ln x\right)
^{+}}^{\alpha }\left( f\circ \exp \right) (\ln b)\right]
\end{eqnarray*}%
where $a,b\in I$ with $a<b$, $\ x\in \lbrack a,b]$ , $\lambda \in \left[ 0,1%
\right] $, $\alpha >0$ and $\Gamma $ is Euler Gamma function. In order to
prove our main results we need the following identity.

\begin{theorem}
Let $f:I\subseteq \left( 0,\infty \right) \rightarrow 
%TCIMACRO{\U{211d} }%
%BeginExpansion
\mathbb{R}
%EndExpansion
$ be a function such that $f\in L[a,b]$, where $a,b\in I$ with $a<b$. If $f$
is a GA-convex function on $[a,b]$, then the following inequalities for
fractional integrals hold:%
\begin{equation}
f\left( \sqrt{ab}\right) \leq \frac{\Gamma (\alpha +1)}{2\left( \ln \frac{b}{%
a}\right) ^{\alpha }}\left\{ J_{\left( \ln a\right) ^{+}}^{\alpha }\left(
f\circ \exp \right) (\ln b)+J_{\left( \ln b\right) ^{-}}^{\alpha }\left(
f\circ \exp \right) (\ln a)\right\} \leq \frac{f(a)+f(b)}{2}  \label{2-0}
\end{equation}%
with $\alpha >0$.
\end{theorem}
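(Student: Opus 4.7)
The plan is to deduce both inequalities from GA-convexity applied in two complementary ways, together with a substitution that identifies certain weighted integrals of $f$ on $[a,b]$ with the Riemann--Liouville fractional integrals of $f\circ\exp$ on $[\ln a,\ln b]$ appearing in the statement.

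\textbf{Change of variables.} I would first establish the two auxiliary identities
\begin{equation*}
\int_{0}^{1} t^{\alpha-1} f(a^{t}b^{1-t})\,dt = \frac{\Gamma(\alpha)}{\left(\ln\frac{b}{a}\right)^{\alpha}}\, J_{(\ln a)^{+}}^{\alpha}(f\circ\exp)(\ln b),
\end{equation*}
\begin{equation*}
\int_{0}^{1} t^{\alpha-1} f(a^{1-t}b^{t})\,dt = \frac{\Gamma(\alpha)}{\left(\ln\frac{b}{a}\right)^{\alpha}}\, J_{(\ln b)^{-}}^{\alpha}(f\circ\exp)(\ln a).
\end{equation*}
These come from the substitutions $u=a^{t}b^{1-t}$ and $v=a^{1-t}b^{t}$: each sends the factor $t^{\alpha-1}$ to a power of $\ln(b/u)$ (resp.\ $\ln(v/a)$) while $dt$ becomes $du/(u\ln(b/a))$, and after rewriting the Riemann--Liouville integral of $f\circ\exp$ with the further substitution $s=\ln u$ one sees the kernels line up exactly.

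\textbf{Left inequality.} Apply GA-convexity at parameter $1/2$ to the pair $x=a^{t}b^{1-t}$, $y=a^{1-t}b^{t}$ and note $x^{1/2}y^{1/2}=\sqrt{ab}$, which yields
\begin{equation*}
2f(\sqrt{ab}) \leq f(a^{t}b^{1-t}) + f(a^{1-t}b^{t}) \qquad \text{for all } t\in[0,1].
\end{equation*}
Multiply by $t^{\alpha-1}$, integrate over $[0,1]$, use the two identities above, and simplify via $\alpha\Gamma(\alpha)=\Gamma(\alpha+1)$. The left inequality of \eqref{2-0} pops out.

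\textbf{Right inequality.} GA-convexity at parameter $t$ gives both
\begin{equation*}
f(a^{t}b^{1-t}) \leq tf(a) + (1-t)f(b), \qquad f(a^{1-t}b^{t}) \leq (1-t)f(a) + tf(b),
\end{equation*}
so adding yields $f(a^{t}b^{1-t}) + f(a^{1-t}b^{t}) \leq f(a)+f(b)$. The same integration against $t^{\alpha-1}$, followed by the same identities, delivers the right-hand bound.

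The only delicate point is bookkeeping in the substitution: the map $t\mapsto a^{t}b^{1-t}$ reverses orientation and uses the kernel $(\ln(b/u))^{\alpha-1}$, so it yields the \emph{left} fractional integral $J_{(\ln a)^{+}}^{\alpha}$, while $t\mapsto a^{1-t}b^{t}$ is orientation-preserving and produces the \emph{right} operator $J_{(\ln b)^{-}}^{\alpha}$. Once these two correspondences are verified, both halves of \eqref{2-0} are immediate one-line consequences of GA-convexity.
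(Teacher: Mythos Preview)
Your proposal is correct and follows essentially the same route as the paper: both apply GA-convexity at $t=1/2$ to the pair $(a^{t}b^{1-t},\,a^{1-t}b^{t})$ for the left bound and add the two GA-convexity estimates at parameter $t$ for the right bound, then multiply by $t^{\alpha-1}$, integrate, and identify the resulting integrals with the Riemann--Liouville fractional integrals of $f\circ\exp$ via the substitutions $u=a^{t}b^{1-t}$ and $u=a^{1-t}b^{t}$. The only difference is organizational---you isolate the two change-of-variable identities up front, whereas the paper carries out the substitution inline---and your bookkeeping on which substitution lands on $J_{(\ln a)^{+}}^{\alpha}$ versus $J_{(\ln b)^{-}}^{\alpha}$ is correct.
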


\begin{proof}
Since $f$ is a GA-convex function on $[a,b]$, we have for all $x,y\in
\lbrack a,b]$ (with $t=1/2$ in the inequality (\ref{1-0}))%
\begin{equation*}
f\left( \sqrt{xy}\right) \leq \frac{f(x)+f(y)}{2}.
\end{equation*}%
Choosing $x=a^{t}b^{1-t}$, $y=b^{t}a^{1-t}$, we get%
\begin{equation}
f\left( \sqrt{ab}\right) \leq \frac{f(a^{t}b^{1-t})+f(b^{t}a^{1-t})}{2}
\label{2-0a}
\end{equation}%
Multiplying both sides of (\ref{2-0a}) by $t^{\alpha -1}$, then integrating
the resulting inequality with respect to $t$ over $[0,1]$, we obtain%
\begin{eqnarray*}
f\left( \sqrt{ab}\right) &\leq &\frac{\alpha }{2}\left\{
\dint\limits_{0}^{1}f(a^{t}b^{1-t})dt+\dint\limits_{0}^{1}f(b^{t}a^{1-t})dt%
\right\} \\
&=&\frac{\alpha }{2}\left\{ \dint\limits_{a}^{b}\left( \frac{\ln b-\ln u}{%
\ln b-\ln a}\right) ^{\alpha -1}f(u)\frac{du}{u\ln \frac{b}{a}}%
+\dint\limits_{0}^{1}\left( \frac{\ln u-\ln a}{\ln b-\ln a}\right) ^{\alpha
-1}f(u)\frac{du}{u\ln \frac{b}{a}}\right\} \\
&=&\frac{\alpha }{2\left( \ln \frac{b}{a}\right) ^{\alpha }}\left\{
\dint\limits_{\ln a}^{\ln b}\left( \ln b-t\right) ^{\alpha
-1}f(e^{t})dt+\dint\limits_{\ln a}^{\ln b}\left( t-\ln a\right) ^{\alpha
-1}f(e^{t})dt\right\} \\
&=&\frac{\Gamma (\alpha +1)}{2\left( \ln \frac{b}{a}\right) ^{\alpha }}%
\left\{ J_{\left( \ln a\right) ^{+}}^{\alpha }\left( f\circ \exp \right)
(\ln b)+J_{\left( \ln b\right) ^{-}}^{\alpha }\left( f\circ \exp \right)
(\ln a)\right\}
\end{eqnarray*}%
and the first inequality is proved.

For the proof of the second inequality in (\ref{2-0}) we first note that if $%
f$ is a convex function, then, for $t\in \left[ 0,1\right] $, it yields%
\begin{equation*}
f(a^{t}b^{1-t})\leq tf(a)+(1-t)f(b)
\end{equation*}%
and%
\begin{equation*}
f(b^{t}a^{1-t})\leq tf(b)+(1-t)f(a).
\end{equation*}%
By adding these inequalities we have%
\begin{equation}
f(a^{t}b^{1-t})+f(b^{t}a^{1-t})\leq f(a)+f(b).  \label{2-0b}
\end{equation}%
Then multiplying both sides of (\ref{2-0b}) by $t^{\alpha -1}$, and
integrating the resulting inequality with respect to $t$ over $\left[ 0,1%
\right] $, we obtain%
\begin{equation*}
\dint\limits_{0}^{1}f(a^{t}b^{1-t})t^{\alpha
-1}dt+\dint\limits_{0}^{1}f(b^{t}a^{1-t})t^{\alpha -1}dt\leq \left[ f(a)+f(b)%
\right] \dint\limits_{0}^{1}t^{\alpha -1}dt
\end{equation*}%
i.e.%
\begin{equation*}
\frac{\Gamma (\alpha +1)}{\left( \ln \frac{b}{a}\right) ^{\alpha }}\left\{
J_{\left( \ln a\right) ^{+}}^{\alpha }\left( f\circ \exp \right) (\ln
b)+J_{\left( \ln b\right) ^{-}}^{\alpha }\left( f\circ \exp \right) (\ln
a)\right\} \leq f(a)+f(b).
\end{equation*}%
The proof is completed.
\end{proof}

\begin{lemma}
\label{2.1}Let $f:I\subseteq \left( 0,\infty \right) \rightarrow 
%TCIMACRO{\U{211d} }%
%BeginExpansion
\mathbb{R}
%EndExpansion
$ be a differentiable function on $I^{\circ }$ such that $f^{\prime }\in
L[a,b]$, where $a,b\in I$ with $a<b$. Then for all $x\in \lbrack a,b]$ , $%
\lambda \in \left[ 0,1\right] $ and $\alpha >0$ we have:%
\begin{eqnarray}
&&I_{f}\left( x,\lambda ,\alpha ,a,b\right) =a\left( \ln \frac{x}{a}\right)
^{\alpha +1}\dint\limits_{0}^{1}\left( t^{\alpha }-\lambda \right) \left( 
\frac{x}{a}\right) ^{t}f^{\prime }\left( x^{t}a^{1-t}\right) dt  \label{2-1}
\\
&&-b\left( \ln \frac{b}{x}\right) ^{\alpha +1}\dint\limits_{0}^{1}\left(
t^{\alpha }-\lambda \right) \left( \frac{x}{b}\right) ^{t}f^{\prime }\left(
x^{t}b^{1-t}\right) dt.  \notag
\end{eqnarray}
\end{lemma}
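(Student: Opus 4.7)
The plan is to start from the right-hand side of (\ref{2-1}), call its two summands $I_1$ and $I_2$, and show directly that $I_1+I_2 = I_f(x,\lambda,\alpha,a,b)$. The key observation that makes the computation work is that
$$\frac{d}{dt}\,f(x^t a^{1-t}) \;=\; \bigl(\ln(x/a)\bigr)\,x^t a^{1-t}\,f'(x^t a^{1-t}),$$
so that
$$\left(\frac{x}{a}\right)^{\!t} f'(x^t a^{1-t}) \;=\; \frac{1}{a\ln(x/a)}\,\frac{d}{dt}\,f(x^t a^{1-t}),$$
and analogously for the $b$-sided integral with an extra sign coming from $\ln(x/b)=-\ln(b/x)$. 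Substituting this into $I_1$ cancels the factor $a$ and one power of $\ln(x/a)$, leaving $I_1=\bigl(\ln(x/a)\bigr)^{\alpha}\int_0^1 (t^\alpha-\lambda)\,\frac{d}{dt}f(x^t a^{1-t})\,dt$, and similarly $I_2=\bigl(\ln(b/x)\bigr)^{\alpha}\int_0^1 (t^\alpha-\lambda)\,\frac{d}{dt}f(x^t b^{1-t})\,dt$.

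Next I would integrate by parts in each integral, differentiating $(t^\alpha-\lambda)$ and integrating $\frac{d}{dt}f(\cdot)$. The boundary term for $I_1$ contributes $(1-\lambda)f(x)+\lambda f(a)$ weighted by $\bigl(\ln(x/a)\bigr)^\alpha$, and for $I_2$ it contributes $(1-\lambda)f(x)+\lambda f(b)$ weighted by $\bigl(\ln(b/x)\bigr)^\alpha$. Together these produce exactly the first two blocks in the definition of $I_f$. What remains is the task of identifying
$$-\alpha\bigl(\ln(x/a)\bigr)^{\alpha}\!\int_0^1 t^{\alpha-1}f(x^t a^{1-t})\,dt \quad\text{and}\quad -\alpha\bigl(\ln(b/x)\bigr)^{\alpha}\!\int_0^1 t^{\alpha-1}f(x^t b^{1-t})\,dt$$
with the two Riemann--Liouville terms in $I_f$.

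For that I would apply the same change of variable already used in the proof of the preceding theorem: set $u=x^t a^{1-t}$ in the first integral, so that $t=\frac{\ln u-\ln a}{\ln(x/a)}$ and $dt=\frac{du}{u\ln(x/a)}$, and then $v=\ln u$. The Jacobian pulls out a factor $1/\bigl(\ln(x/a)\bigr)^{\alpha}$ which precisely cancels the prefactor, and what is left is $\int_{\ln a}^{\ln x}(v-\ln a)^{\alpha-1}(f\circ\exp)(v)\,dv = \Gamma(\alpha)\,J_{(\ln x)^-}^{\alpha}(f\circ\exp)(\ln a)$. Using $\alpha\Gamma(\alpha)=\Gamma(\alpha+1)$ yields the first fractional-integral term. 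The analogous substitution $u=x^t b^{1-t}$ in the second integral (with the reversal of limits contributing the right sign, since now $\ln u$ decreases from $\ln b$ to $\ln x$) gives $\Gamma(\alpha+1)\,J_{(\ln x)^+}^{\alpha}(f\circ\exp)(\ln b)$.

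The only real bookkeeping obstacle is keeping signs straight in $I_2$: the overall minus sign in front of $b(\ln(b/x))^{\alpha+1}$ in (\ref{2-1}) combines with the negative $\ln(x/b)=-\ln(b/x)$ coming out of the derivative-identity, and with the reversal of the limits in the $u$-substitution, so that all three sign flips must be tracked simultaneously. Once this is handled carefully, adding $I_1+I_2$ reproduces the three blocks in the definition of $I_f(x,\lambda,\alpha,a,b)$, completing the identity.
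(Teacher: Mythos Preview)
Your proposal is correct and follows essentially the same route as the paper: recognize that $a\ln(x/a)\,(x/a)^{t}f'(x^{t}a^{1-t})\,dt = df(x^{t}a^{1-t})$, integrate by parts, and then use the substitution $u=x^{t}a^{1-t}$, $v=\ln u$ (and the analogous steps on the $b$-side) to identify the remaining integrals as the Riemann--Liouville terms. The only minor omission is that your derivation divides by $\ln(x/a)$ and $\ln(b/x)$, so it tacitly assumes $a<x<b$; the paper treats the endpoint cases $x=a$ and $x=b$ separately, though these are trivial since the corresponding summand in (\ref{2-1}) vanishes.
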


\begin{proof}
By integration by parts and twice changing the variable, for $x\neq a$ we
can state%
\begin{eqnarray}
&&a\ln \frac{x}{a}\dint\limits_{0}^{1}\left( t^{\alpha }-\lambda \right)
\left( \frac{x}{a}\right) ^{t}f^{\prime }\left( x^{t}a^{1-t}\right) dt
\label{2-1a} \\
&=&\dint\limits_{0}^{1}\left( t^{\alpha }-\lambda \right) df\left(
x^{t}a^{1-t}\right)  \notag \\
&=&\left. \left( t^{\alpha }-\lambda \right) f\left( x^{t}a^{1-t}\right)
\right\vert _{0}^{1}-\frac{\alpha }{\left( \ln \frac{x}{a}\right) ^{\alpha }}%
\dint\limits_{a}^{x}\left( \ln u-\ln a\right) ^{\alpha -1}\frac{f\left(
u\right) }{u}du  \notag \\
&=&\left( 1-\lambda \right) f(x)+\lambda f(a)-\frac{\alpha }{\left( \ln 
\frac{x}{a}\right) ^{\alpha }}\dint\limits_{\ln a}^{\ln x}\left( t-\ln
a\right) ^{\alpha -1}f(e^{t})dt  \notag \\
&=&\left( 1-\lambda \right) f(x)+\lambda f(a)-\frac{\Gamma \left( \alpha
+1\right) }{\left( \ln \frac{x}{a}\right) ^{\alpha }}J_{\left( \ln x\right)
^{-}}^{\alpha }\left( f\circ \exp \right) (\ln a)  \notag
\end{eqnarray}%
and for $x\neq b$ similarly we get%
\begin{eqnarray}
&&-b\ln \frac{b}{x}\dint\limits_{0}^{1}\left( t^{\alpha }-\lambda \right)
\left( \frac{x}{b}\right) ^{t}f^{\prime }\left( x^{t}b^{1-t}\right) dt
\label{2-1b} \\
&=&\dint\limits_{0}^{1}\left( t^{\alpha }-\lambda \right) df\left(
x^{t}b^{1-t}\right)  \notag \\
&=&\left. \left( t^{\alpha }-\lambda \right) f\left( x^{t}b^{1-t}\right)
\right\vert _{0}^{1}-\frac{\alpha }{\left( \ln \frac{b}{x}\right) ^{\alpha }}%
\dint\limits_{x}^{b}\left( \ln b-\ln u\right) ^{\alpha -1}\frac{f\left(
u\right) }{u}du  \notag \\
&=&\left( 1-\lambda \right) f(x)+\lambda f(b)-\frac{\alpha }{\left( \ln 
\frac{b}{x}\right) ^{\alpha }}\dint\limits_{\ln x}^{\ln b}\left( \ln
b-t\right) ^{\alpha -1}f(e^{t})dt  \notag \\
&=&\left( 1-\lambda \right) f(x)+\lambda f(b)-\frac{\Gamma \left( \alpha
+1\right) }{\left( \ln \frac{b}{x}\right) ^{\alpha }}J_{\left( \ln x\right)
^{+}}^{\alpha }\left( f\circ \exp \right) (\ln b)  \notag
\end{eqnarray}%
Multiplying both sides of (\ref{2-1a}) and (\ref{2-1b}) by $\left( \ln \frac{%
x}{a}\right) ^{\alpha }$ and $\left( \ln \frac{b}{x}\right) ^{\alpha }$,
respectively, and adding the resulting identities we obtain the desired
result. For $x=a$ and $x=b$ the identities%
\begin{equation*}
I_{f}\left( a,\lambda ,\alpha ;a,b\right) =b\left( \ln \frac{b}{a}\right)
^{\alpha +1}\dint\limits_{0}^{1}\left( t^{\alpha }-\lambda \right) \left( 
\frac{a}{b}\right) ^{t}f^{\prime }\left( a^{t}b^{1-t}\right) dt,
\end{equation*}%
and%
\begin{equation*}
I_{f}\left( b,\lambda ,\alpha ;a,b\right) =a\left( \ln \frac{b}{a}\right)
^{\alpha +1}\dint\limits_{0}^{1}\left( t^{\alpha }-\lambda \right) \left( 
\frac{b}{a}\right) ^{t}f^{\prime }\left( b^{t}a^{1-t}\right) ,
\end{equation*}%
can be proved respectively easily by performing an integration by parts in
the integrals from the right side and changing the variable.
\end{proof}

\subsection{For GA-$s$-convex functions.}

\begin{theorem}
\label{2.1.1}Let $f:$ $I\subset \left( 0,\infty \right) \rightarrow 
%TCIMACRO{\U{211d} }%
%BeginExpansion
\mathbb{R}
%EndExpansion
$ be a differentiable function on $I^{\circ }$ such that $f^{\prime }\in
L[a,b]$, where $a,b\in I^{\circ }$ with $a<b$. If $|f^{\prime }|^{q}$ is GA-$%
s$-convex on $[a,b]$ in the second sense for some fixed $q\geq 1$, $x\in
\lbrack a,b]$, $\lambda \in \left[ 0,1\right] $ and $\alpha >0$ then the
following inequality for fractional integrals holds%
\begin{eqnarray}
&&\left\vert I_{f}\left( x,\lambda ,\alpha ,a,b\right) \right\vert  \notag \\
&\leq &A_{1}^{1-\frac{1}{q}}\left( \alpha ,\lambda \right) \left\{ a\left(
\ln \frac{x}{a}\right) ^{\alpha +1}\left( \left\vert f^{\prime }\left(
x\right) \right\vert ^{q}A_{2}\left( x,\alpha ,\lambda ,s,q\right)
+\left\vert f^{\prime }\left( a\right) \right\vert ^{q}A_{3}\left( x,\alpha
,\lambda ,s,q\right) \right) ^{\frac{1}{q}}\right.  \label{2-2} \\
&&+\left. b\left( \ln \frac{b}{x}\right) ^{\alpha +1}\left( \left\vert
f^{\prime }\left( x\right) \right\vert ^{q}A_{4}\left( x,\alpha ,\lambda
,s,q\right) +\left\vert f^{\prime }\left( b\right) \right\vert
^{q}A_{5}\left( x,\alpha ,\lambda ,s,q\right) \right) ^{\frac{1}{q}}\right\}
,  \notag
\end{eqnarray}%
where 
\begin{eqnarray*}
A_{1}\left( \alpha ,\lambda \right) &=&\frac{2\alpha \lambda ^{1+\frac{1}{%
\alpha }}+1}{\alpha +1}-\lambda , \\
A_{2}\left( x,\alpha ,\lambda ,s,q\right) &=&\dint\limits_{0}^{1}\left\vert
t^{\alpha }-\lambda \right\vert \left( \frac{x}{a}\right) ^{qt}t^{s}dt, \\
A_{3}\left( x,\alpha ,\lambda ,s,q\right) &=&\dint\limits_{0}^{1}\left\vert
t^{\alpha }-\lambda \right\vert \left( \frac{x}{a}\right) ^{qt}\left(
1-t\right) ^{s}dt, \\
A_{4}\left( x,\alpha ,\lambda ,s,q\right) &=&\dint\limits_{0}^{1}\left\vert
t^{\alpha }-\lambda \right\vert \left( \frac{x}{b}\right) ^{qt}t^{s}dt, \\
A_{5}\left( x,\alpha ,\lambda ,s,q\right) &=&\dint\limits_{0}^{1}\left\vert
t^{\alpha }-\lambda \right\vert \left( \frac{x}{b}\right) ^{qt}\left(
1-t\right) ^{s}dt.
\end{eqnarray*}
\end{theorem}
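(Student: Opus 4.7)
The plan is to combine the integral identity from Lemma \ref{2.1} with the power-mean inequality and the definition of GA-$s$-convexity applied to $|f'|^q$.

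First, I would start from the identity
\begin{equation*}
I_{f}(x,\lambda,\alpha,a,b) = a\left(\ln\tfrac{x}{a}\right)^{\alpha+1}\int_0^1 (t^\alpha-\lambda)\left(\tfrac{x}{a}\right)^t f'(x^t a^{1-t})\,dt - b\left(\ln\tfrac{b}{x}\right)^{\alpha+1}\int_0^1 (t^\alpha-\lambda)\left(\tfrac{x}{b}\right)^t f'(x^t b^{1-t})\,dt,
\end{equation*}
apply the triangle inequality to move the absolute value inside each of the two integrals, and then treat each integral separately.

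Next, for each integral I would apply the power-mean inequality with weight $|t^\alpha-\lambda|$. Writing
\begin{equation*}
|t^\alpha-\lambda|\left(\tfrac{x}{a}\right)^t|f'(x^t a^{1-t})| = |t^\alpha-\lambda|^{1-\frac{1}{q}}\cdot\Bigl(|t^\alpha-\lambda|\left(\tfrac{x}{a}\right)^{qt}|f'(x^t a^{1-t})|^q\Bigr)^{\frac{1}{q}},
\end{equation*}
Hölder (equivalently, the weighted power-mean inequality) yields
\begin{equation*}
\int_0^1 |t^\alpha-\lambda|\left(\tfrac{x}{a}\right)^t|f'(x^t a^{1-t})|\,dt \le \left(\int_0^1 |t^\alpha-\lambda|\,dt\right)^{1-\frac{1}{q}}\left(\int_0^1 |t^\alpha-\lambda|\left(\tfrac{x}{a}\right)^{qt}|f'(x^t a^{1-t})|^q\,dt\right)^{\frac{1}{q}}.
\end{equation*}
The first factor will produce $A_1(\alpha,\lambda)^{1-1/q}$ once I compute $\int_0^1|t^\alpha-\lambda|\,dt$ by splitting the integral at $t=\lambda^{1/\alpha}$; a short calculation gives exactly $A_1(\alpha,\lambda)=\frac{2\alpha\lambda^{1+1/\alpha}+1}{\alpha+1}-\lambda$.

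For the second factor, I would use the hypothesis that $|f'|^q$ is GA-$s$-convex: applied with arguments $x$ and $a$, this gives $|f'(x^t a^{1-t})|^q \le t^s|f'(x)|^q + (1-t)^s|f'(a)|^q$. Substituting and recognizing the integrals that appear as $A_2$ and $A_3$ bounds the first piece by $|f'(x)|^q A_2 + |f'(a)|^q A_3$. An entirely parallel argument on the second integral, using GA-$s$-convexity with arguments $x$ and $b$, produces the term with $A_4$ and $A_5$. Assembling the two contributions with their prefactors $a(\ln\frac{x}{a})^{\alpha+1}$ and $b(\ln\frac{b}{x})^{\alpha+1}$ yields \eqref{2-2}.

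The only nontrivial step is the precise evaluation of $A_1(\alpha,\lambda)$, which forces one to break the integration at the point where $t^\alpha=\lambda$; the rest is mechanical. The degenerate cases $x=a$ and $x=b$ are handled by the corresponding reduced identities in Lemma \ref{2.1} and lead to the same bound with one of the two pieces vanishing, so no separate argument is needed.
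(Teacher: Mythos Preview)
Your proposal is correct and follows essentially the same route as the paper: start from the identity of Lemma~\ref{2.1}, take absolute values, apply the power-mean (H\"older) inequality with weight $|t^{\alpha}-\lambda|$, invoke GA-$s$-convexity of $|f'|^{q}$ to produce $A_{2},\dots,A_{5}$, and evaluate $\int_{0}^{1}|t^{\alpha}-\lambda|\,dt$ by splitting at $t=\lambda^{1/\alpha}$ to obtain $A_{1}(\alpha,\lambda)$. The paper's proof is identical in structure and detail.
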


\begin{proof}
From Lemma \ref{2.1}, property of the modulus and using the power-mean
inequality we have%
\begin{eqnarray}
&&\left\vert I_{f}\left( x,\lambda ,\alpha ,a,b\right) \right\vert \leq
a\left( \ln \frac{x}{a}\right) ^{\alpha +1}\dint\limits_{0}^{1}\left\vert
t^{\alpha }-\lambda \right\vert \left( \frac{x}{a}\right) ^{t}\left\vert
f^{\prime }\left( x^{t}a^{1-t}\right) \right\vert dt  \notag \\
&&+b\left( \ln \frac{b}{x}\right) ^{\alpha +1}\dint\limits_{0}^{1}\left\vert
t^{\alpha }-\lambda \right\vert \left( \frac{x}{b}\right) ^{t}\left\vert
f^{\prime }\left( x^{t}b^{1-t}\right) \right\vert dt  \notag \\
&\leq &a\left( \ln \frac{x}{a}\right) ^{\alpha +1}\left(
\dint\limits_{0}^{1}\left\vert t^{\alpha }-\lambda \right\vert dt\right) ^{1-%
\frac{1}{q}}\left( \dint\limits_{0}^{1}\left\vert t^{\alpha }-\lambda
\right\vert \left( \frac{x}{a}\right) ^{qt}\left\vert f^{\prime }\left(
x^{t}a^{1-t}\right) \right\vert ^{q}dt\right) ^{\frac{1}{q}}  \notag \\
&&+b\left( \ln \frac{b}{x}\right) ^{\alpha +1}\left(
\dint\limits_{0}^{1}\left\vert t^{\alpha }-\lambda \right\vert dt\right) ^{1-%
\frac{1}{q}}\left( \dint\limits_{0}^{1}\left\vert t^{\alpha }-\lambda
\right\vert \left( \frac{x}{b}\right) ^{qt}\left\vert f^{\prime }\left(
x^{t}b^{1-t}\right) \right\vert ^{q}dt\right) ^{\frac{1}{q}}  \label{2-2a}
\end{eqnarray}%
Since$\left\vert f^{\prime }\right\vert ^{q}$ is GA-$s$-convex on $[a,b]$ we
get%
\begin{eqnarray}
\dint\limits_{0}^{1}\left\vert t^{\alpha }-\lambda \right\vert \left( \frac{x%
}{a}\right) ^{qt}\left\vert f^{\prime }\left( x^{t}a^{1-t}\right)
\right\vert ^{q}dt &\leq &\dint\limits_{0}^{1}\left\vert t^{\alpha }-\lambda
\right\vert \left( \frac{x}{a}\right) ^{qt}\left( t^{s}\left\vert f^{\prime
}\left( x\right) \right\vert ^{q}+\left( 1-t\right) ^{s}\left\vert f^{\prime
}\left( a\right) \right\vert ^{q}\right) dt  \notag \\
&=&\left\vert f^{\prime }\left( x\right) \right\vert ^{q}A_{2}\left(
x,\alpha ,\lambda ,s,q\right) +\left\vert f^{\prime }\left( a\right)
\right\vert ^{q}A_{3}\left( x,\alpha ,\lambda ,s,q\right) ,  \label{2-2b}
\end{eqnarray}%
\begin{eqnarray}
\dint\limits_{0}^{1}\left\vert t^{\alpha }-\lambda \right\vert \left( \frac{x%
}{b}\right) ^{qt}\left\vert f^{\prime }\left( x^{t}b^{1-t}\right)
\right\vert ^{q}dt &\leq &\dint\limits_{0}^{1}\left\vert t^{\alpha }-\lambda
\right\vert \left( \frac{x}{b}\right) ^{qt}\left( t^{s}\left\vert f^{\prime
}\left( x\right) \right\vert ^{q}+\left( 1-t\right) ^{s}\left\vert f^{\prime
}\left( b\right) \right\vert ^{q}\right) dt  \notag \\
&=&\left\vert f^{\prime }\left( x\right) \right\vert ^{q}A_{4}\left(
x,\alpha ,\lambda ,s,q\right) +\left\vert f^{\prime }\left( b\right)
\right\vert ^{q}A_{5}\left( x,\alpha ,\lambda ,s,q\right) ,  \label{2-2c}
\end{eqnarray}%
by simple computation%
\begin{eqnarray}
\dint\limits_{0}^{1}\left\vert t^{\alpha }-\lambda \right\vert dt
&=&\dint\limits_{0}^{\lambda ^{\frac{1}{\alpha }}}\left( \lambda -t^{\alpha
}\right) dt+\dint\limits_{\lambda ^{\frac{1}{\alpha }}}^{1}\left( t^{\alpha
}-\lambda \right) dt  \notag \\
&=&\frac{2\alpha \lambda ^{1+\frac{1}{\alpha }}+1}{\alpha +1}-\lambda .
\label{2-2d}
\end{eqnarray}%
Hence, If we use (\ref{2-2b}), (\ref{2-2c}) and (\ref{2-2d}) in (\ref{2-2a}%
), we obtain the desired result. This completes the proof.
\end{proof}

\begin{corollary}
Under the assumptions of Theorem \ref{2.1.1} with $s=1,$ the inequality (\ref%
{2-2}) reduced to the following inequality%
\begin{eqnarray*}
&&\left\vert I_{f}\left( x,\lambda ,\alpha ,a,b\right) \right\vert \\
&\leq &A_{1}^{1-\frac{1}{q}}\left( \alpha ,\lambda \right) \left\{ a\left(
\ln \frac{x}{a}\right) ^{\alpha +1}\left( \left\vert f^{\prime }\left(
x\right) \right\vert ^{q}A_{2}\left( x,\alpha ,\lambda ,1,q\right)
+\left\vert f^{\prime }\left( a\right) \right\vert ^{q}A_{3}\left( x,\alpha
,\lambda ,1,q\right) \right) ^{\frac{1}{q}}\right. \\
&&+\left. b\left( \ln \frac{b}{x}\right) ^{\alpha +1}\left( \left\vert
f^{\prime }\left( x\right) \right\vert ^{q}A_{4}\left( x,\alpha ,\lambda
,1,q\right) +\left\vert f^{\prime }\left( b\right) \right\vert
^{q}A_{5}\left( x,\alpha ,\lambda ,1,q\right) \right) ^{\frac{1}{q}}\right\}
.
\end{eqnarray*}
\end{corollary}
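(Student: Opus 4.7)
The approach here is pure specialization: the corollary is obtained from Theorem \ref{2.1.1} by fixing $s = 1$ throughout. So the plan has two small steps, and the ``main obstacle'' is really only a matter of bookkeeping.

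First, I would verify that the hypotheses of Theorem \ref{2.1.1} are met. Recall the observation made immediately after the definition of GA-$s$-convex functions in the Introduction: when $s = 1$, the defining inequality $f(x^{t}y^{1-t}) \le t^{s}f(x) + (1-t)^{s}f(y)$ collapses to $f(x^{t}y^{1-t}) \le t f(x) + (1-t) f(y)$, which is exactly GA-convexity. Therefore the assumption ``$|f'|^{q}$ is GA-convex on $[a,b]$'' is precisely the $s = 1$ instance of ``$|f'|^{q}$ is GA-$s$-convex on $[a,b]$ in the second sense,'' and the remaining hypotheses on $f$, $a$, $b$, $x$, $\lambda$, $\alpha$, $q$ are identical in the two statements.

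Second, I would apply Theorem \ref{2.1.1} directly and track the dependence on $s$. The parameter $s$ appears in inequality (\ref{2-2}) only through the auxiliary quantities $A_{2}, A_{3}, A_{4}, A_{5}$; the prefactor $A_{1}(\alpha, \lambda)$ is independent of $s$. Substituting $s = 1$ into the integral definitions of $A_{2}, \ldots, A_{5}$ produces exactly $A_{2}(x, \alpha, \lambda, 1, q), \ldots, A_{5}(x, \alpha, \lambda, 1, q)$, and plugging these into (\ref{2-2}) yields the inequality asserted in the corollary verbatim.

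The main ``obstacle'' is essentially nonexistent: the result is a notational specialization rather than a new estimate. If desired, one could further simplify by evaluating the integrals $\int_{0}^{1}|t^{\alpha} - \lambda|(x/a)^{qt} t\, dt$ and $\int_{0}^{1}|t^{\alpha} - \lambda|(x/a)^{qt}(1-t)\, dt$ in closed form (splitting at $t = \lambda^{1/\alpha}$ as in the derivation of (\ref{2-2d})), but the corollary as stated leaves them in integral form, so no such computation is required.
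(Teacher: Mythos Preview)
Your proposal is correct and matches the paper's approach: the paper offers no separate proof for this corollary, treating it as an immediate specialization of Theorem \ref{2.1.1} obtained by setting $s=1$, which is exactly what you do.
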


\begin{corollary}
Under the assumptions of Theorem \ref{2.1.1} with $s=1$ and $\alpha =1,$ the
inequality (\ref{2-2}) reduced to the following inequality%
\begin{eqnarray*}
&&\left\vert I_{f}\left( x,\lambda ,\alpha ,a,b\right) \right\vert \\
&=&\ln \frac{b}{a}\left( 1-\lambda \right) f(x)+\lambda \left[ f(a)\ln \frac{%
x}{a}+f(b)\ln \frac{b}{x}\right] -\alpha \int_{a}^{b}\frac{f(u)}{u}du \\
&\leq &A_{1}^{1-\frac{1}{q}}\left( 1,\lambda \right) \left\{ a\left( \ln 
\frac{x}{a}\right) ^{2}\left( \left\vert f^{\prime }\left( x\right)
\right\vert ^{q}A_{2}\left( x,1,\lambda ,1,q\right) +\left\vert f^{\prime
}\left( a\right) \right\vert ^{q}A_{3}\left( x,1,\lambda ,1,q\right) \right)
^{\frac{1}{q}}\right. \\
&&+\left. b\left( \ln \frac{b}{x}\right) ^{2}\left( \left\vert f^{\prime
}\left( x\right) \right\vert ^{q}A_{4}\left( x,1,\lambda ,1,q\right)
+\left\vert f^{\prime }\left( b\right) \right\vert ^{q}A_{5}\left(
x,1,\lambda ,1,q\right) \right) ^{\frac{1}{q}}\right\} .
\end{eqnarray*}
\end{corollary}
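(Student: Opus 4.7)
The plan is to obtain this corollary as a direct specialization of Theorem \ref{2.1.1} at the parameters $s=1$ and $\alpha=1$; no new inequality arguments are needed. The entire task is bookkeeping: confirming that the right-hand side of (\ref{2-2}) becomes, term for term, the right-hand side stated in the corollary, and then rewriting $I_f(x,\lambda,1,a,b)$ in closed form via the definition of $I_f$ given at the start of Section~2.

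First I would invoke Theorem \ref{2.1.1} with $s=1$ and $\alpha=1$. The exponent $\alpha+1$ becomes $2$, and each factor $A_i(x,\alpha,\lambda,s,q)$ on the right of (\ref{2-2}) becomes $A_i(x,1,\lambda,1,q)$; similarly $A_1(\alpha,\lambda)$ becomes $A_1(1,\lambda)$. So the bound side of the corollary is literally the bound side of (\ref{2-2}) after this substitution, with no further manipulation required.

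Next I would simplify the left-hand side $|I_f(x,\lambda,1,a,b)|$ by plugging $\alpha=1$ into
\[
I_f(x,\lambda,\alpha,a,b)=(1-\lambda)\bigl[\ln^{\alpha}\tfrac{x}{a}+\ln^{\alpha}\tfrac{b}{x}\bigr]f(x)+\lambda\bigl[f(a)\ln^{\alpha}\tfrac{x}{a}+f(b)\ln^{\alpha}\tfrac{b}{x}\bigr]-\Gamma(\alpha+1)\bigl[J^{\alpha}_{(\ln x)^{-}}(f\circ\exp)(\ln a)+J^{\alpha}_{(\ln x)^{+}}(f\circ\exp)(\ln b)\bigr].
\]
At $\alpha=1$ the first bracket collapses to $\ln\tfrac{x}{a}+\ln\tfrac{b}{x}=\ln\tfrac{b}{a}$, the middle bracket is unchanged, and $\Gamma(2)=1$. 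For the two Riemann--Liouville integrals, the kernel $(\cdot)^{\alpha-1}$ becomes $1$, so $J^{1}_{(\ln x)^{-}}(f\circ\exp)(\ln a)=\int_{\ln a}^{\ln x}f(e^{t})\,dt$; the change of variable $u=e^{t}$, $du=u\,dt$ converts this to $\int_{a}^{x}f(u)/u\,du$. Similarly $J^{1}_{(\ln x)^{+}}(f\circ\exp)(\ln b)=\int_{x}^{b}f(u)/u\,du$, and the two pieces fit together across $x$ to give $\int_{a}^{b}f(u)/u\,du$. This yields
\[
I_f(x,\lambda,1,a,b)=(1-\lambda)\ln\tfrac{b}{a}\,f(x)+\lambda\bigl[f(a)\ln\tfrac{x}{a}+f(b)\ln\tfrac{b}{x}\bigr]-\int_{a}^{b}\frac{f(u)}{u}\,du,
\]
which matches the equality displayed in the corollary (the stray ``$\alpha$'' in front of the integral is to be read at the specialization $\alpha=1$).

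There is no real obstacle: the argument consists of a parameter substitution plus the single change of variable $u=e^{t}$ inside the fractional integrals. The only points worth double-checking are that $\Gamma(\alpha+1)=1$ at $\alpha=1$ is used to eliminate the Gamma prefactor, and that the two one-sided integrals on $[a,x]$ and $[x,b]$ patch to a single integral on $[a,b]$ after the $u=e^{t}$ change of variable.
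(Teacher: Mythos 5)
Your proposal is correct and is exactly the argument the paper intends (the paper gives no separate proof for this corollary, treating it as an immediate specialization of Theorem \ref{2.1.1}): substitute $s=1$, $\alpha=1$ into (\ref{2-2}), use $\Gamma(2)=1$ and the fact that the order-one Riemann--Liouville integrals reduce under $u=e^{t}$ to $\int_{a}^{x}f(u)/u\,du$ and $\int_{x}^{b}f(u)/u\,du$, which patch to $\int_{a}^{b}f(u)/u\,du$. You also correctly identify the stray ``$\alpha$'' before the integral (and the missing absolute value on the middle line) as typographical artifacts of the specialization.
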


\begin{corollary}
Under the assumptions of Theorem \ref{2.1.1} with $q=1,$ the inequality (\ref%
{2-2}) reduced to the following inequality%
\begin{eqnarray*}
\left\vert I_{f}\left( x,\lambda ,\alpha ,a,b\right) \right\vert &\leq
&\left\{ a\left( \ln \frac{x}{a}\right) ^{\alpha +1}\left( \left\vert
f^{\prime }\left( x\right) \right\vert A_{2}\left( x,\alpha ,\lambda
,s,1\right) +\left\vert f^{\prime }\left( a\right) \right\vert A_{3}\left(
x,\alpha ,\lambda ,s,1\right) \right) \right. \\
&&+\left. b\left( \ln \frac{b}{x}\right) ^{\alpha +1}\left( \left\vert
f^{\prime }\left( x\right) \right\vert A_{4}\left( x,\alpha ,\lambda
,s,1\right) +\left\vert f^{\prime }\left( b\right) \right\vert A_{5}\left(
x,\alpha ,\lambda ,s,1\right) \right) \right\} .
\end{eqnarray*}
\end{corollary}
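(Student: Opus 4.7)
The corollary is a direct specialization of Theorem \ref{2.1.1}, obtained by setting $q=1$ in inequality (\ref{2-2}).

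First, I would observe that $A_1^{1-1/q}(\alpha,\lambda)$ evaluated at $q=1$ equals $A_1^{0}(\alpha,\lambda)=1$, so this prefactor disappears from the bound. Next, the outer exponent $1/q$ collapses to $1$, so each expression of the form $\bigl(|f'(x)|^q A_j+|f'(\cdot)|^q A_k\bigr)^{1/q}$ reduces to $|f'(x)|A_j(x,\alpha,\lambda,s,1)+|f'(\cdot)|A_k(x,\alpha,\lambda,s,1)$. Applying these simplifications to both the $\ln(x/a)^{\alpha+1}$ term and the $\ln(b/x)^{\alpha+1}$ term of (\ref{2-2}) yields exactly the stated inequality.

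Alternatively, the inequality can be proved directly from Lemma \ref{2.1} without invoking the power-mean step at all. Starting from the identity for $I_{f}(x,\lambda,\alpha,a,b)$, I would pass the modulus inside both integrals using the triangle inequality, then apply GA-$s$-convexity of $|f'|$ (which is the $q=1$ case of the GA-$s$-convexity of $|f'|^q$) to obtain
\[
|f'(x^{t}a^{1-t})|\leq t^{s}|f'(x)|+(1-t)^{s}|f'(a)|,\quad |f'(x^{t}b^{1-t})|\leq t^{s}|f'(x)|+(1-t)^{s}|f'(b)|.
\]
Substituting and splitting each integral produces the four integrals that are precisely $A_{2},A_{3},A_{4},A_{5}$ evaluated at $(x,\alpha,\lambda,s,1)$, which matches the claimed bound term by term.

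The proof has essentially no obstacle: the $q=1$ case sidesteps the power-mean inequality entirely, so the argument reduces to applying the triangle inequality, invoking GA-$s$-convexity, and separating the linear combination. The only bookkeeping items are confirming that $A_{1}^{0}=1$ and that the outer $q$-th power together with the $1/q$-th root both trivialize when $q=1$; both are immediate.
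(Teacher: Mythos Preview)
Your proposal is correct and matches the paper's approach: the corollary is stated without proof as an immediate specialization of Theorem~\ref{2.1.1} at $q=1$, and your observation that $A_{1}^{1-1/q}$ becomes $A_{1}^{0}=1$ while the outer $1/q$-power trivializes is exactly the intended simplification. Your alternative direct argument from Lemma~\ref{2.1} is also fine but unnecessary here.
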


\begin{corollary}
Under the assumptions of Theorem \ref{2.1.1} with $x=\sqrt{ab},\ \lambda =%
\frac{1}{3},$from the inequality (\ref{2-2}) we get the following Simpson
type inequality for fractional integrals%
\begin{eqnarray*}
&&\left\vert 2^{\alpha -1}\left( \ln \frac{b}{a}\right) ^{-\alpha
}I_{f}\left( \sqrt{ab},\frac{1}{3},\alpha ,a,b\right) \right\vert \\
&=&\left\vert \frac{1}{6}\left[ f(a)+4f\left( \sqrt{ab}\right) +f(b)\right] -%
\frac{2^{\alpha -1}\Gamma \left( \alpha +1\right) }{\left( \ln \frac{b}{a}%
\right) ^{\alpha }}\left[ J_{\left( \ln \sqrt{ab}\right) ^{-}}^{\alpha
}\left( f\circ \exp \right) (\ln a)+J_{\left( \ln \sqrt{ab}\right)
^{+}}^{\alpha }\left( f\circ \exp \right) (\ln b)\right] \right\vert \\
&\leq &\frac{\ln \frac{b}{a}}{4}A_{1}^{1-\frac{1}{q}}\left( \alpha ,\frac{1}{%
3}\right) \left\{ a\left( \left\vert f^{\prime }\left( \sqrt{ab}\right)
\right\vert ^{q}A_{2}\left( \sqrt{ab},\alpha ,\frac{1}{3},s,q\right)
+\left\vert f^{\prime }\left( a\right) \right\vert ^{q}A_{3}\left( \sqrt{ab}%
,\alpha ,\frac{1}{3},s,q\right) \right) ^{\frac{1}{q}}\right. \\
&&\left. +b\left( \left\vert f^{\prime }\left( \sqrt{ab}\right) \right\vert
^{q}A_{4}\left( \sqrt{ab},\alpha ,\frac{1}{3},s,q\right) +\left\vert
f^{\prime }\left( b\right) \right\vert ^{q}A_{5}\left( \sqrt{ab},\alpha ,%
\frac{1}{3},s,q\right) \right) ^{\frac{1}{q}}\right\} .
\end{eqnarray*}
\end{corollary}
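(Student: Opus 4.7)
The plan is to obtain this corollary as a direct specialization of Theorem \ref{2.1.1} at $x=\sqrt{ab}$ and $\lambda=\tfrac{1}{3}$, using the key symmetry $\ln\tfrac{x}{a}=\ln\tfrac{b}{x}=\tfrac{1}{2}\ln\tfrac{b}{a}$ that makes the two ``halves'' of $I_{f}$ collapse into a clean Simpson-type combination.

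First, I would substitute $x=\sqrt{ab}$ into the definition of $I_{f}(x,\lambda,\alpha,a,b)$ given just before Lemma~\ref{2.1}. Since $\ln\tfrac{\sqrt{ab}}{a}=\ln\tfrac{b}{\sqrt{ab}}=\tfrac{1}{2}\ln\tfrac{b}{a}$, both bracketed logarithmic factors equal $2^{-\alpha}\bigl(\ln\tfrac{b}{a}\bigr)^{\alpha}$. With $\lambda=\tfrac{1}{3}$, the boundary term becomes
\begin{equation*}
\tfrac{2}{3}\cdot 2\cdot 2^{-\alpha}\bigl(\ln\tfrac{b}{a}\bigr)^{\alpha} f(\sqrt{ab})+\tfrac{1}{3}\cdot 2^{-\alpha}\bigl(\ln\tfrac{b}{a}\bigr)^{\alpha}\bigl[f(a)+f(b)\bigr].
\end{equation*}
Multiplying $I_{f}(\sqrt{ab},\tfrac{1}{3},\alpha,a,b)$ by the normalizing factor $2^{\alpha-1}\bigl(\ln\tfrac{b}{a}\bigr)^{-\alpha}$ then produces, for the non-fractional part, exactly $\tfrac{1}{6}[f(a)+4f(\sqrt{ab})+f(b)]$, which is the Simpson weighting in the corollary. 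The fractional-integral part, after the same normalization, turns into $\tfrac{2^{\alpha-1}\Gamma(\alpha+1)}{(\ln\frac{b}{a})^{\alpha}}$ times the sum of $J^{\alpha}_{(\ln\sqrt{ab})^{\pm}}(f\circ\exp)$ terms, matching the statement.

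Next I would handle the right-hand side of the inequality~(\ref{2-2}) with the same substitutions. The prefactors $a\bigl(\ln\tfrac{x}{a}\bigr)^{\alpha+1}$ and $b\bigl(\ln\tfrac{b}{x}\bigr)^{\alpha+1}$ both collapse to $\cdot\,2^{-(\alpha+1)}\bigl(\ln\tfrac{b}{a}\bigr)^{\alpha+1}$ times $a$ or $b$ respectively. Once I multiply the whole inequality by the same normalizer $2^{\alpha-1}\bigl(\ln\tfrac{b}{a}\bigr)^{-\alpha}$ that I used on the left, the combined constant is $2^{\alpha-1}\cdot 2^{-(\alpha+1)}\ln\tfrac{b}{a}=\tfrac{1}{4}\ln\tfrac{b}{a}$, which is precisely the $\tfrac{\ln(b/a)}{4}$ appearing in the corollary. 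The inner $q$-power brackets carry over unchanged, with $f'(x)$ evaluated at $\sqrt{ab}$ and the auxiliary quantities $A_{i}$ evaluated at $(\sqrt{ab},\alpha,\tfrac{1}{3},s,q)$ as written.

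No real obstacle is expected: the proof is essentially arithmetic bookkeeping of powers of $2$ and of $\ln\tfrac{b}{a}$. The only place where care is needed is verifying the algebra that turns the weighted combination $\tfrac{2}{3}f(\sqrt{ab})\cdot 2+\tfrac{1}{3}[f(a)+f(b)]$, after multiplication by $2^{\alpha-1}/2^{\alpha}=1/2$, into the clean Simpson form $\tfrac{1}{6}[f(a)+4f(\sqrt{ab})+f(b)]$; this should come out on the nose. Then invoking (\ref{2-2}) of Theorem~\ref{2.1.1} directly yields the claimed Simpson-type estimate, and the proof is complete.
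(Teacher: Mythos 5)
Your proposal is correct and is exactly the argument the paper intends: the corollary is obtained by direct substitution of $x=\sqrt{ab}$ and $\lambda=\tfrac{1}{3}$ into Theorem \ref{2.1.1}, using $\ln\tfrac{\sqrt{ab}}{a}=\ln\tfrac{b}{\sqrt{ab}}=\tfrac{1}{2}\ln\tfrac{b}{a}$ and normalizing by $2^{\alpha-1}\left(\ln\tfrac{b}{a}\right)^{-\alpha}$, and your bookkeeping of the powers of $2$ (yielding the $\tfrac{1}{6}$ Simpson weights and the $\tfrac{1}{4}\ln\tfrac{b}{a}$ prefactor) checks out.
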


\begin{corollary}
Under the assumptions of Theorem \ref{2.1.1} with $x=\sqrt{ab},\ \lambda =0,$%
from the inequality (\ref{2-2}) we get 
\begin{eqnarray*}
&&\left\vert 2^{\alpha -1}\left( \ln \frac{b}{a}\right) ^{-\alpha
}I_{f}\left( \sqrt{ab},0,\alpha ,a,b\right) \right\vert \\
&=&\left\vert f\left( \sqrt{ab}\right) -\frac{2^{\alpha -1}\Gamma \left(
\alpha +1\right) }{\left( \ln \frac{b}{a}\right) ^{\alpha }}\left[ J_{\left(
\ln \sqrt{ab}\right) ^{-}}^{\alpha }\left( f\circ \exp \right) (\ln
a)+J_{\left( \ln \sqrt{ab}\right) ^{+}}^{\alpha }\left( f\circ \exp \right)
(\ln b)\right] \right\vert \\
&\leq &\frac{\ln \frac{b}{a}}{4}\left( \frac{1}{\alpha +1}\right) ^{1-\frac{1%
}{q}}\left\{ a\left( \left\vert f^{\prime }\left( \sqrt{ab}\right)
\right\vert ^{q}A_{2}\left( \sqrt{ab},\alpha ,0,s,q\right) +\left\vert
f^{\prime }\left( a\right) \right\vert ^{q}A_{3}\left( \sqrt{ab},\alpha
,0,s,q\right) \right) ^{\frac{1}{q}}\right. \\
&&\left. +b\left( \left\vert f^{\prime }\left( \sqrt{ab}\right) \right\vert
^{q}A_{4}\left( \sqrt{ab},\alpha ,0,s,q\right) +\left\vert f^{\prime }\left(
b\right) \right\vert ^{q}A_{5}\left( \sqrt{ab},\alpha ,0,s,q\right) \right)
^{\frac{1}{q}}\right\} .
\end{eqnarray*}
\end{corollary}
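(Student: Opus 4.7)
The plan is to specialize Theorem~\ref{2.1.1} to the single point $x=\sqrt{ab}$ with $\lambda=0$ and then rescale both sides by $2^{\alpha-1}(\ln(b/a))^{-\alpha}$. The driving simplification is the symmetry $\ln(\sqrt{ab}/a)=\ln(b/\sqrt{ab})=\tfrac12\ln(b/a)$, which immediately gives $\ln^{\alpha}(\sqrt{ab}/a)=\ln^{\alpha}(b/\sqrt{ab})=2^{-\alpha}\ln^{\alpha}(b/a)$ and its $(\alpha+1)$-analogue $2^{-(\alpha+1)}\ln^{\alpha+1}(b/a)$.

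First I would substitute $x=\sqrt{ab}$ and $\lambda=0$ into the definition of $I_f(x,\lambda,\alpha,a,b)$. The $\lambda$-weighted endpoint term $\lambda[f(a)\ln^{\alpha}(x/a)+f(b)\ln^{\alpha}(b/x)]$ vanishes, the two $(1-\lambda)\ln^{\alpha}$ contributions collapse into $2\cdot 2^{-\alpha}\ln^{\alpha}(b/a)=2^{1-\alpha}\ln^{\alpha}(b/a)$, and after multiplying by $2^{\alpha-1}(\ln(b/a))^{-\alpha}$ the left-hand side takes exactly the stated form, namely $f(\sqrt{ab})$ minus the normalized pair of Riemann--Liouville integrals centered at $\ln\sqrt{ab}$. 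For the right-hand side of (\ref{2-2}), the outer factors $a\ln^{\alpha+1}(\sqrt{ab}/a)$ and $b\ln^{\alpha+1}(b/\sqrt{ab})$ each carry $2^{-(\alpha+1)}\ln^{\alpha+1}(b/a)$, which combines with the same rescaling factor to yield the prefactor $\tfrac{\ln(b/a)}{4}$. Finally, $\lambda=0$ reduces $A_1(\alpha,\lambda)=\frac{2\alpha\lambda^{1+1/\alpha}+1}{\alpha+1}-\lambda$ to $\frac{1}{\alpha+1}$, so $A_1^{1-1/q}(\alpha,0)=\bigl(\tfrac{1}{\alpha+1}\bigr)^{1-1/q}$; the quantities $A_2,\dots,A_5$ remain in their general form with the first slot specialized to $\sqrt{ab}$ and the $\lambda$-slot to $0$.

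There is no genuine analytic obstacle, since all the inequality content is already delivered by Theorem~\ref{2.1.1}. The only step requiring care is tracking the powers of $2$ and of $\ln(b/a)$ consistently through the renormalization so that the displayed constants come out correctly, and applying the rescaling factor $2^{\alpha-1}(\ln(b/a))^{-\alpha}$ uniformly to both sides so the direction of the inequality is preserved.
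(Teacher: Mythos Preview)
Your proposal is correct and matches the paper's approach: the corollary is stated without proof as an immediate specialization of Theorem~\ref{2.1.1}, and your argument carries out exactly that substitution, tracking the constants via the symmetry $\ln(\sqrt{ab}/a)=\ln(b/\sqrt{ab})=\tfrac12\ln(b/a)$ and the evaluation $A_1(\alpha,0)=\tfrac{1}{\alpha+1}$.
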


\begin{corollary}
\label{2.1.1a}Under the assumptions of Theorem \ref{2.1.1} with$\ x=\sqrt{ab}
$ and $\lambda =1,$ from the inequality (\ref{2-2}) we get 
\begin{eqnarray*}
&&\left\vert 2^{\alpha -1}\left( \ln \frac{b}{a}\right) ^{-\alpha
}I_{f}\left( \sqrt{ab},1,\alpha ,a,b\right) \right\vert \\
&=&\left\vert \frac{f(a)+f(b)}{2}-\frac{2^{\alpha -1}\Gamma \left( \alpha
+1\right) }{\left( \ln \frac{b}{a}\right) ^{\alpha }}\left[ J_{\left( \ln 
\sqrt{ab}\right) ^{-}}^{\alpha }\left( f\circ \exp \right) (\ln a)+J_{\left(
\ln \sqrt{ab}\right) ^{+}}^{\alpha }\left( f\circ \exp \right) (\ln b)\right]
\right\vert \\
&\leq &\frac{\ln \frac{b}{a}}{4}\left( \frac{\alpha }{\alpha +1}\right) ^{1-%
\frac{1}{q}}\left\{ a\left[ \left\vert f^{\prime }\left( \sqrt{ab}\right)
\right\vert ^{q}A_{2}\left( \sqrt{ab},\alpha ,1,s,q\right) +\left\vert
f^{\prime }\left( a\right) \right\vert ^{q}A_{3}\left( \sqrt{ab},\alpha
,1,s,q\right) \right] ^{\frac{1}{q}}\right. \\
&&\left. +b\left[ \left\vert f^{\prime }\left( \sqrt{ab}\right) \right\vert
^{q}A_{4}\left( \sqrt{ab},\alpha ,1,s,q\right) +\left\vert f^{\prime }\left(
b\right) \right\vert ^{q}A_{5}\left( \sqrt{ab},\alpha ,1,s,q\right) \right]
^{\frac{1}{q}}\right\} .
\end{eqnarray*}
\end{corollary}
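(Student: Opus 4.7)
\textbf{Proof plan for Corollary \ref{2.1.1a}.}
The strategy is purely a specialization: substitute $x=\sqrt{ab}$ and $\lambda=1$ into the inequality (\ref{2-2}) of Theorem \ref{2.1.1} and simplify both sides. No new analytic content is needed.

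First I would compute the left-hand side $I_{f}(\sqrt{ab},1,\alpha,a,b)$ from its definition. Since $\ln(\sqrt{ab}/a)=\ln(b/\sqrt{ab})=\tfrac{1}{2}\ln(b/a)$, both logarithmic powers collapse to $(\ln(b/a))^{\alpha}/2^{\alpha}$; the $(1-\lambda)$ term vanishes, and the $\lambda$ term combines to $\bigl((\ln(b/a))^{\alpha}/2^{\alpha}\bigr)(f(a)+f(b))$. Multiplying the resulting identity by the normalising factor $2^{\alpha-1}(\ln(b/a))^{-\alpha}$ produces exactly the stated expression $\tfrac{f(a)+f(b)}{2}$ minus the prescribed fractional-integral combination. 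This matches the middle line of the corollary's display.

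Next I would simplify the right-hand side of (\ref{2-2}) under the same substitutions. The constant $A_{1}(\alpha,\lambda)=\frac{2\alpha\lambda^{1+1/\alpha}+1}{\alpha+1}-\lambda$ evaluated at $\lambda=1$ telescopes to $\frac{\alpha}{\alpha+1}$, producing the factor $\bigl(\frac{\alpha}{\alpha+1}\bigr)^{1-1/q}$. The prefactors $a(\ln(x/a))^{\alpha+1}$ and $b(\ln(b/x))^{\alpha+1}$ each become $\frac{1}{2^{\alpha+1}}(\ln(b/a))^{\alpha+1}$ times $a$ or $b$ respectively. The auxiliary quantities $A_{2},\dots,A_{5}$ at $x=\sqrt{ab}$, $\lambda=1$ are left in their integral form because they are just the evaluations of the same functionals. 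After bringing in the factor $2^{\alpha-1}(\ln(b/a))^{-\alpha}$ that normalises the LHS, the power of $2$ collapses as $2^{\alpha-1}/2^{\alpha+1}=1/4$ and one power of $\ln(b/a)$ survives, yielding the $\frac{\ln(b/a)}{4}$ prefactor in the stated bound.

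There is essentially no obstacle; the only place to be careful is bookkeeping the exponents of $2$ and of $\ln(b/a)$ so that the cancellations produce $\tfrac{1}{4}\ln(b/a)$ cleanly, and checking the simplification $A_{1}(\alpha,1)=\alpha/(\alpha+1)$. Once these are verified, the chain of equalities on the left-hand side and the chain of substitutions on the right-hand side can be concatenated and the inequality becomes an immediate consequence of Theorem \ref{2.1.1}. I would end the proof with a one-line remark that the claim follows at once from the previously displayed specialization.
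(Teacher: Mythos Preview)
Your proposal is correct and is exactly the approach the paper takes: the corollary is stated without a separate proof because it follows immediately by substituting $x=\sqrt{ab}$ and $\lambda=1$ into inequality~(\ref{2-2}) and simplifying, precisely as you outline. Your checks that $A_{1}(\alpha,1)=\alpha/(\alpha+1)$ and that the powers of $2$ and $\ln(b/a)$ collapse to the prefactor $\tfrac{1}{4}\ln(b/a)$ are the only computations involved, and they are accurate.
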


\begin{corollary}
\label{2.1.1b}Let the assumptions of Theorem \ref{2.1.1} hold. If $\
\left\vert f^{\prime }(x)\right\vert \leq M$ for all $x\in \left[ a,b\right] 
$ and $\lambda =0,$ then from the inequality (\ref{2-2}) we get the
following Ostrowski type inequality for fractional integrals%
\begin{eqnarray*}
&&\left\vert \left[ \left( \ln \frac{x}{a}\right) ^{\alpha }+\left( \ln 
\frac{b}{x}\right) ^{\alpha }\right] f(x)-\Gamma \left( \alpha +1\right) %
\left[ J_{\left( \ln x\right) ^{-}}^{\alpha }\left( f\circ \exp \right) (\ln
a)+J_{\left( \ln x\right) ^{+}}^{\alpha }\left( f\circ \exp \right) (\ln b)%
\right] \right\vert \\
&\leq &M\left( \frac{1}{\alpha +1}\right) ^{1-\frac{1}{q}}\left\{ a\left(
\ln \frac{x}{a}\right) ^{\alpha }\left[ A_{2}\left( x,\alpha ,0,s,q\right)
+A_{3}\left( x,\alpha ,0,s,q\right) \right] ^{\frac{1}{q}}\right. \\
&&\left. +b\left( \ln \frac{b}{x}\right) ^{\alpha }\left[ A_{4}\left(
x,\alpha ,0,s,q\right) +A_{5}\left( x,\alpha ,0,s,q\right) \right] ^{\frac{1%
}{q}}\right\}
\end{eqnarray*}%
for each $x\in \left[ a,b\right] .$
\end{corollary}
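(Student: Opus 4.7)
The plan is to specialize Theorem~\ref{2.1.1} to the parameter choice $\lambda=0$ and then use the uniform bound $|f'|\le M$ to eliminate the derivative values from the right-hand side. This is a routine application of an already-established estimate, so I expect no genuine analytic obstacles; the work is essentially bookkeeping.

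First I would simplify the left-hand side. Substituting $\lambda=0$ into the definition of $I_f(x,\lambda,\alpha,a,b)$ annihilates the term carrying $\lambda\bigl[f(a)\ln^\alpha(x/a)+f(b)\ln^\alpha(b/x)\bigr]$, leaving precisely
\[I_f(x,0,\alpha,a,b)=\bigl[\ln^\alpha(x/a)+\ln^\alpha(b/x)\bigr]f(x)-\Gamma(\alpha+1)\bigl[J_{(\ln x)^-}^\alpha(f\circ\exp)(\ln a)+J_{(\ln x)^+}^\alpha(f\circ\exp)(\ln b)\bigr],\]
which, up to the absolute value, is the quantity displayed on the left in the corollary.

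Next I would specialize the constants on the right-hand side of inequality~(\ref{2-2}). From the closed-form expression for $A_{1}$ one reads off $A_{1}(\alpha,0)=1/(\alpha+1)$, which accounts for the $\bigl(1/(\alpha+1)\bigr)^{1-1/q}$ prefactor in the conclusion. The remaining functions $A_{2}(x,\alpha,0,s,q),\dots,A_{5}(x,\alpha,0,s,q)$ are left in their integral form, since the statement already displays them symbolically.

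Finally I would invoke the hypothesis $|f'(u)|\le M$ for all $u\in[a,b]$ to bound the two bracketed quantities $|f'(x)|^{q}A_{2}+|f'(a)|^{q}A_{3}$ and $|f'(x)|^{q}A_{4}+|f'(b)|^{q}A_{5}$ by $M^{q}(A_{2}+A_{3})$ and $M^{q}(A_{4}+A_{5})$ respectively; taking the $q$-th root then pulls an overall factor of $M$ out of each summand, yielding the stated bound. The only point requiring a second look is the exponent of $\ln(x/a)$ and $\ln(b/x)$ in the bound: Theorem~\ref{2.1.1} contributes $\alpha+1$, whereas the corollary as printed displays $\alpha$. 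I would treat the latter as a typographical slip and record the result with exponent $\alpha+1$, consistent with the direct substitution.
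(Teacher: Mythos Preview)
Your proposal is correct and matches the paper's intent: the corollary is stated without proof, as an immediate specialization of Theorem~\ref{2.1.1} at $\lambda=0$ together with the bound $|f'|\le M$, which is exactly what you carry out. Your observation that the exponents on $\ln(x/a)$ and $\ln(b/x)$ should be $\alpha+1$ rather than $\alpha$ is also correct; the printed statement appears to contain a typographical slip.
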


\subsection{For quasi-geometrically convex functions}

\begin{definition}
A function $f:I\subseteq \left( 0,\infty \right) \mathbb{\rightarrow R}$ is
said to be quasi-geometrically convex on $I$ if 
\begin{equation*}
f\left( x^{t}y^{1-t}\right) \leq \sup \left\{ f(x),f(y)\right\} ,
\end{equation*}%
for any $x,y\in I$ and $t\in \left[ 0,1\right] .$
\end{definition}

Clearly, any GA-convex and geometrically convex functions are
quasi-geometrically convex functions. Furthermore, there exist quasi-convex
functions which are neither GA-convex nor geometrically convex. In that
context, we point out an elementary example. The function $f:\left( 0,4%
\right] \rightarrow 
%TCIMACRO{\U{211d} }%
%BeginExpansion
\mathbb{R}
%EndExpansion
,$%
\begin{equation*}
f(x)=\left\{ 
\begin{array}{cc}
1, & x\in \left( 0,1\right] \\ 
(x-2)^{2}, & x\in \left[ 1,4\right]%
\end{array}%
\right.
\end{equation*}%
is neither GA-convex nor geometrically convex on $\left( 0,4\right] $ but it
is a quasi-geometrically convex function on $\left( 0,4\right] .$

\begin{theorem}
\label{2.2}Let $f:$ $I\subset \left( 0,\infty \right) \rightarrow 
%TCIMACRO{\U{211d} }%
%BeginExpansion
\mathbb{R}
%EndExpansion
$ be a differentiable function on $I^{\circ }$ such that $f^{\prime }\in
L[a,b]$, where $a,b\in I^{\circ }$ with $a<b$. If $|f^{\prime }|^{q}$ is
quasi-geometrically convex on $[a,b]$ for some fixed $q\geq 1$, $x\in
\lbrack a,b]$, $\lambda \in \left[ 0,1\right] $ and $\alpha >0$ then the
following inequality for fractional integrals holds%
\begin{eqnarray}
&&\left\vert I_{f}\left( x,\lambda ,\alpha ,a,b\right) \right\vert
\label{2-3} \\
&\leq &A_{1}^{1-\frac{1}{q}}\left( \alpha ,\lambda \right) \left\{ a\left(
\ln \frac{x}{a}\right) ^{\alpha +1}\left( \sup \left\{ \left\vert f^{\prime
}\left( x\right) \right\vert ^{q},\left\vert f^{\prime }\left( a\right)
\right\vert ^{q}\right\} \right) ^{\frac{1}{q}}B_{1}^{\frac{1}{q}}\left(
x,\alpha ,\lambda ,s\right) \right.  \notag \\
&&\left. +b\left( \ln \frac{b}{x}\right) ^{\alpha +1}\left( \sup \left\{
\left\vert f^{\prime }\left( x\right) \right\vert ^{q},\left\vert f^{\prime
}\left( b\right) \right\vert ^{q}\right\} \right) ^{\frac{1}{q}}B_{2}^{\frac{%
1}{q}}\left( x,\alpha ,\lambda ,s\right) \right\}  \notag
\end{eqnarray}%
where 
\begin{eqnarray*}
A_{1}\left( \alpha ,\lambda \right) &=&\frac{2\alpha \lambda ^{1+\frac{1}{%
\alpha }}+1}{\alpha +1}-\lambda , \\
B_{1}\left( x,\alpha ,\lambda ,q\right) &=&\dint\limits_{0}^{1}\left\vert
t^{\alpha }-\lambda \right\vert \left( \frac{x}{a}\right) ^{qt}dt \\
B_{2}\left( x,\alpha ,\lambda ,q\right) &=&\dint\limits_{0}^{1}\left\vert
t^{\alpha }-\lambda \right\vert \left( \frac{x}{b}\right) ^{qt}dt
\end{eqnarray*}
\end{theorem}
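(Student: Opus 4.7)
The plan is to imitate the proof of Theorem \ref{2.1.1} almost verbatim, with the single substitution of quasi-geometric convexity in place of GA-$s$-convexity at the stage where $|f'|^q$ is estimated inside the integrals. First, I would invoke the identity from Lemma \ref{2.1} to write $I_f(x,\lambda,\alpha,a,b)$ as a sum of two single integrals, and then apply the triangle inequality to obtain
\[
\left|I_f(x,\lambda,\alpha,a,b)\right| \le a\left(\ln\tfrac{x}{a}\right)^{\alpha+1}\!\!\int_0^1 |t^\alpha-\lambda|\left(\tfrac{x}{a}\right)^{t}|f'(x^t a^{1-t})|\,dt + b\left(\ln\tfrac{b}{x}\right)^{\alpha+1}\!\!\int_0^1 |t^\alpha-\lambda|\left(\tfrac{x}{b}\right)^{t}|f'(x^t b^{1-t})|\,dt.
\]

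Next, I would apply the power-mean inequality to each of the two integrals, splitting $|t^\alpha-\lambda|$ as $|t^\alpha-\lambda|^{1-1/q}\cdot|t^\alpha-\lambda|^{1/q}$ and pairing the second factor with the exponential weight $(x/a)^{qt}$ or $(x/b)^{qt}$ and $|f'|^q$. This yields precisely the factor $A_1^{1-1/q}(\alpha,\lambda)$ outside, upon using the routine computation
\[
\int_0^1|t^\alpha-\lambda|\,dt = \int_0^{\lambda^{1/\alpha}}(\lambda-t^\alpha)\,dt+\int_{\lambda^{1/\alpha}}^1(t^\alpha-\lambda)\,dt = \frac{2\alpha\lambda^{1+1/\alpha}+1}{\alpha+1}-\lambda = A_1(\alpha,\lambda),
\]
exactly as in equation (\ref{2-2d}).

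The key step specific to this theorem is the bound on $|f'(x^t a^{1-t})|^q$ and $|f'(x^t b^{1-t})|^q$. Since $|f'|^q$ is assumed quasi-geometrically convex on $[a,b]$, I simply use the defining inequality to pull out the suprema:
\[
|f'(x^t a^{1-t})|^q \le \sup\{|f'(x)|^q,|f'(a)|^q\},\qquad |f'(x^t b^{1-t})|^q \le \sup\{|f'(x)|^q,|f'(b)|^q\},
\]
so that these suprema factor out of the remaining integrals $\int_0^1 |t^\alpha-\lambda|(x/a)^{qt}dt = B_1(x,\alpha,\lambda,q)$ and $\int_0^1 |t^\alpha-\lambda|(x/b)^{qt}dt = B_2(x,\alpha,\lambda,q)$. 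Combining everything yields the stated inequality (\ref{2-3}).

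There is essentially no analytical obstacle here: unlike the GA-$s$-convex case, the quasi-geometric bound is constant in $t$, so the inner integrals reduce directly to $B_1$ and $B_2$ without splitting into two pieces weighted by $t^s$ and $(1-t)^s$. The only thing to keep an eye on is notational consistency — the statement lists the arguments of $B_1,B_2$ as $(x,\alpha,\lambda,s)$ with a redundant $s$, but by the definitions given the integrals depend only on $q$ (not $s$), so I would silently treat $B_i(x,\alpha,\lambda,s)$ as $B_i(x,\alpha,\lambda,q)$ when plugging into (\ref{2-3}). No further estimation or new idea is needed beyond what has already been developed for Theorem \ref{2.1.1}.
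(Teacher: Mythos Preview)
Your proposal is correct and follows essentially the same route as the paper: invoke Lemma \ref{2.1}, apply the modulus and the power-mean inequality to reach the intermediate estimate (\ref{2-2a}), then replace the GA-$s$-convexity bound by the quasi-geometric convexity bound $\sup\{|f'(x)|^q,|f'(\cdot)|^q\}$ and use the computation (\ref{2-2d}) for $A_1$. Your observation about the spurious $s$ in the arguments of $B_1,B_2$ is also accurate; the paper's own proof tacitly uses $B_i(x,\alpha,\lambda,q)$.
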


\begin{proof}
We proceed similarly as in the proof Theorem \ref{2.1.1}. Since $\left\vert
f^{\prime }\right\vert ^{q}$ is quasi-geometrically convex on $[a,b],$ for
all $t\in \left[ 0,1\right] $%
\begin{equation*}
\left\vert f^{\prime }\left( x^{t}a^{1-t}\right) \right\vert ^{q}\leq \sup
\left\{ \left\vert f^{\prime }\left( x\right) \right\vert ^{q},\left\vert
f^{\prime }\left( a\right) \right\vert ^{q}\right\}
\end{equation*}%
and%
\begin{equation*}
\left\vert f^{\prime }\left( x^{t}b^{1-t}\right) \right\vert ^{q}\leq \sup
\left\{ \left\vert f^{\prime }\left( x\right) \right\vert ^{q},\left\vert
f^{\prime }\left( b\right) \right\vert ^{q}\right\} .
\end{equation*}%
Hence, from the inequality (\ref{2-2a}) we get%
\begin{eqnarray*}
&\leq &a\left( \ln \frac{x}{a}\right) ^{\alpha +1}\left(
\dint\limits_{0}^{1}\left\vert t^{\alpha }-\lambda \right\vert dt\right) ^{1-%
\frac{1}{q}}\left( \dint\limits_{0}^{1}\left\vert t^{\alpha }-\lambda
\right\vert \left( \frac{x}{a}\right) ^{qt}\sup \left\{ \left\vert f^{\prime
}\left( x\right) \right\vert ^{q},\left\vert f^{\prime }\left( a\right)
\right\vert ^{q}\right\} dt\right) ^{\frac{1}{q}} \\
&&+b\left( \ln \frac{b}{x}\right) ^{\alpha +1}\left(
\dint\limits_{0}^{1}\left\vert t^{\alpha }-\lambda \right\vert dt\right) ^{1-%
\frac{1}{q}}\left( \dint\limits_{0}^{1}\left\vert t^{\alpha }-\lambda
\right\vert \left( \frac{x}{b}\right) ^{qt}\sup \left\{ \left\vert f^{\prime
}\left( x\right) \right\vert ^{q},\left\vert f^{\prime }\left( b\right)
\right\vert ^{q}\right\} dt\right) ^{\frac{1}{q}}
\end{eqnarray*}%
\begin{eqnarray*}
&&\left\vert I_{f}\left( x,\lambda ,\alpha ,a,b\right) \right\vert \leq
\left( \dint\limits_{0}^{1}\left\vert t^{\alpha }-\lambda \right\vert
dt\right) ^{1-\frac{1}{q}} \\
&&\times \left\{ a\left( \ln \frac{x}{a}\right) ^{\alpha +1}\left( \sup
\left\{ \left\vert f^{\prime }\left( x\right) \right\vert ^{q},\left\vert
f^{\prime }\left( a\right) \right\vert ^{q}\right\} \right) ^{\frac{1}{q}%
}\left( \dint\limits_{0}^{1}\left\vert t^{\alpha }-\lambda \right\vert
\left( \frac{x}{a}\right) ^{qt}dt\right) ^{\frac{1}{q}}\right. \\
&&\left. +b\left( \ln \frac{b}{x}\right) ^{\alpha +1}\left( \sup \left\{
\left\vert f^{\prime }\left( x\right) \right\vert ^{q},\left\vert f^{\prime
}\left( b\right) \right\vert ^{q}\right\} \right) ^{\frac{1}{q}}\left(
\dint\limits_{0}^{1}\left\vert t^{\alpha }-\lambda \right\vert \left( \frac{x%
}{b}\right) ^{qt}dt\right) ^{\frac{1}{q}}\right\} \\
&\leq &A_{1}^{1-\frac{1}{q}}\left( \alpha ,\lambda \right) \left\{ a\left(
\ln \frac{x}{a}\right) ^{\alpha +1}\left( \sup \left\{ \left\vert f^{\prime
}\left( x\right) \right\vert ^{q},\left\vert f^{\prime }\left( a\right)
\right\vert ^{q}\right\} \right) ^{\frac{1}{q}}B_{1}^{\frac{1}{q}}\left(
x,\alpha ,\lambda ,q\right) \right. \\
&&\left. +b\left( \ln \frac{b}{x}\right) ^{\alpha +1}\left( \sup \left\{
\left\vert f^{\prime }\left( x\right) \right\vert ^{q},\left\vert f^{\prime
}\left( b\right) \right\vert ^{q}\right\} \right) ^{\frac{1}{q}}B_{2}^{\frac{%
1}{q}}\left( x,\alpha ,\lambda ,q\right) \right\}
\end{eqnarray*}%
which completes the proof.
\end{proof}

\begin{corollary}
Under the assumptions of Theorem \ref{2.2} with $q=1,$ the inequality (\ref%
{2-3}) reduced to the following inequality%
\begin{eqnarray*}
\left\vert I_{f}\left( x,\lambda ,\alpha ,a,b\right) \right\vert &\leq
&\left\{ a\left( \ln \frac{x}{a}\right) ^{\alpha +1}B_{1}\left( x,\alpha
,\lambda ,1\right) \sup \left\{ \left\vert f^{\prime }\left( x\right)
\right\vert ,\left\vert f^{\prime }\left( a\right) \right\vert \right\}
\right. \\
&&+\left. b\left( \ln \frac{b}{x}\right) ^{\alpha +1}B_{2}\left( x,\alpha
,\lambda ,1\right) \sup \left\{ \left\vert f^{\prime }\left( x\right)
\right\vert ,\left\vert f^{\prime }\left( b\right) \right\vert \right\}
\right\} .
\end{eqnarray*}
\end{corollary}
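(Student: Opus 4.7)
The plan is simply to specialize Theorem \ref{2.2} by setting $q=1$ in the inequality (\ref{2-3}) and to observe how each factor collapses. This is a substitution argument with no real computation required; the only thing to verify is that each exponent $1/q$ and $1-1/q$ simplifies correctly.

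First I would write down (\ref{2-3}) verbatim and track the three places where $q$ appears. The leading factor $A_{1}^{1-\frac{1}{q}}(\alpha,\lambda)$ carries the exponent $1-1/q$, which vanishes when $q=1$, so this factor reduces to $1$ and the term $A_{1}(\alpha,\lambda)$ disappears entirely from the bound. Next, the two suprema appear in (\ref{2-3}) raised to the power $1/q$; when $q=1$ this exponent equals $1$, so $\bigl(\sup\{|f'(x)|^{q},|f'(a)|^{q}\}\bigr)^{1/q}$ becomes $\sup\{|f'(x)|,|f'(a)|\}$ (and analogously with $b$ in place of $a$). Finally, the factors $B_{1}^{1/q}(x,\alpha,\lambda,q)$ and $B_{2}^{1/q}(x,\alpha,\lambda,q)$ reduce to $B_{1}(x,\alpha,\lambda,1)$ and $B_{2}(x,\alpha,\lambda,1)$ respectively.

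Collecting these three simplifications inside the braces in (\ref{2-3}) yields exactly the asserted inequality. Since this is a direct specialization of a theorem already proved, there is no obstacle; the only care needed is purely notational — making sure the $q$-parameter in $B_{1}(x,\alpha,\lambda,q)$ and $B_{2}(x,\alpha,\lambda,q)$ is replaced by $1$ in both the value of the integrand exponent $qt$ and in the name of the symbol, so the final statement reads $B_{i}(x,\alpha,\lambda,1)$ with integrand involving $(x/a)^{t}$ or $(x/b)^{t}$.
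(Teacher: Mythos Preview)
Your proposal is correct and matches the paper's approach: the corollary is stated in the paper without proof, as it is an immediate specialization of Theorem \ref{2.2} obtained by setting $q=1$, exactly as you describe. The three simplifications you track (the exponent $1-1/q$ vanishing on $A_1$, the exponent $1/q$ becoming $1$ on the suprema, and $B_i^{1/q}(x,\alpha,\lambda,q)$ reducing to $B_i(x,\alpha,\lambda,1)$) are precisely what is needed, and nothing further is required.
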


\begin{corollary}
Under the assumptions of Theorem \ref{2.2} with $x=\sqrt{ab},\ \lambda =%
\frac{1}{3},$ from the inequality (\ref{2-3}) we get the following Simpson
type inequality or fractional integrals%
\begin{eqnarray*}
&&\left\vert \frac{1}{6}\left[ f(a)+4f\left( \sqrt{ab}\right) +f(b)\right] -%
\frac{2^{\alpha -1}\Gamma \left( \alpha +1\right) }{\left( \ln \frac{b}{a}%
\right) ^{\alpha }}\left[ J_{\left( \ln \sqrt{ab}\right) ^{-}}^{\alpha
}\left( f\circ \exp \right) (\ln a)+J_{\left( \ln \sqrt{ab}\right)
^{+}}^{\alpha }\left( f\circ \exp \right) (\ln b)\right] \right\vert \\
&\leq &\frac{\ln \frac{b}{a}}{4}A_{1}^{1-\frac{1}{q}}\left( \alpha ,\frac{1}{%
3}\right) \left\{ a\left[ \sup \left\{ \left\vert f^{\prime }\left( \sqrt{ab}%
\right) \right\vert ,\left\vert f^{\prime }\left( a\right) \right\vert
\right\} \right] ^{\frac{1}{q}}B_{1}^{\frac{1}{q}}\left( \sqrt{ab},\alpha ,%
\frac{1}{3},q\right) \right. \\
&&\left. +b\left[ \sup \left\{ \left\vert f^{\prime }\left( \sqrt{ab}\right)
\right\vert ,\left\vert f^{\prime }\left( b\right) \right\vert \right\} %
\right] ^{\frac{1}{q}}B_{2}^{\frac{1}{q}}\left( \sqrt{ab},\alpha ,\frac{1}{3}%
,q\right) \right\}
\end{eqnarray*}
\end{corollary}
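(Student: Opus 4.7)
The plan is to apply Theorem \ref{2.2} directly with the specific choices $x=\sqrt{ab}$ and $\lambda=\frac{1}{3}$, and then simplify both sides of the resulting inequality (\ref{2-3}) by multiplying by the scaling factor $2^{\alpha-1}\left(\ln\frac{b}{a}\right)^{-\alpha}$. Since this is a direct specialization of a theorem already proved, no new analytic machinery is required; the work consists of careful bookkeeping of the logarithmic factors.

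First, I would record the two elementary identities that drive the simplification: at $x=\sqrt{ab}$ we have $\ln\frac{x}{a}=\ln\frac{b}{x}=\frac{1}{2}\ln\frac{b}{a}$, so that
\[
\left(\ln\tfrac{x}{a}\right)^{\alpha}=\left(\ln\tfrac{b}{x}\right)^{\alpha}=2^{-\alpha}\left(\ln\tfrac{b}{a}\right)^{\alpha},\qquad \left(\ln\tfrac{x}{a}\right)^{\alpha+1}=\left(\ln\tfrac{b}{x}\right)^{\alpha+1}=2^{-(\alpha+1)}\left(\ln\tfrac{b}{a}\right)^{\alpha+1}.
\]
Substituting these, together with $1-\lambda=\frac{2}{3}$ and $\lambda=\frac{1}{3}$, into the definition of $I_{f}\!\left(\sqrt{ab},\tfrac{1}{3},\alpha,a,b\right)$ and multiplying through by $2^{\alpha-1}\left(\ln\frac{b}{a}\right)^{-\alpha}$, the coefficients collapse: the $f(\sqrt{ab})$ term becomes $\frac{2}{3}f(\sqrt{ab})$, the boundary terms contribute $\frac{1}{6}[f(a)+f(b)]$, and together they assemble into the classical Simpson combination $\frac{1}{6}\bigl[f(a)+4f(\sqrt{ab})+f(b)\bigr]$, while the fractional integral term is left with the prefactor $2^{\alpha-1}\Gamma(\alpha+1)\left(\ln\frac{b}{a}\right)^{-\alpha}$. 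This gives exactly the left-hand side claimed in the corollary.

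Second, I would treat the right-hand side of (\ref{2-3}) in the same way. Multiplying the factor $a\left(\ln\frac{x}{a}\right)^{\alpha+1}$ (and the analogous one with $b$) by $2^{\alpha-1}\left(\ln\frac{b}{a}\right)^{-\alpha}$ produces
\[
2^{\alpha-1}\left(\ln\tfrac{b}{a}\right)^{-\alpha}\cdot 2^{-(\alpha+1)}\left(\ln\tfrac{b}{a}\right)^{\alpha+1}=\frac{\ln\frac{b}{a}}{4},
\]
which is precisely the prefactor appearing in the stated inequality. The suprema in the $|f'|^{q}$ terms are inherited unchanged from Theorem \ref{2.2}, and the functions $A_1$, $B_1$, $B_2$ are evaluated at the prescribed arguments $(\alpha,\tfrac{1}{3})$ and $(\sqrt{ab},\alpha,\tfrac{1}{3},q)$, respectively.

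The only real obstacle is the bookkeeping: one must be careful not to lose a factor of $2$ when converting the identity $\frac{2}{3}\cdot 2 f(\sqrt{ab})\cdot\frac{1}{2}=\frac{2}{3}f(\sqrt{ab})$ and when identifying the factor $\frac{\ln(b/a)}{4}$. Once these are verified, no further argument is needed — the inequality of the corollary is simply the content of inequality (\ref{2-3}) after the above rescaling.
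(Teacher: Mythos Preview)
Your proposal is correct and matches the paper's approach: the corollary is stated in the paper without a separate proof, being an immediate specialization of Theorem~\ref{2.2} at $x=\sqrt{ab}$, $\lambda=\tfrac{1}{3}$, followed by multiplication by $2^{\alpha-1}\bigl(\ln\tfrac{b}{a}\bigr)^{-\alpha}$, exactly as you describe. Your bookkeeping of the logarithmic factors and the emergence of the Simpson combination $\tfrac{1}{6}[f(a)+4f(\sqrt{ab})+f(b)]$ and the prefactor $\tfrac{1}{4}\ln\tfrac{b}{a}$ is accurate.
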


\begin{corollary}
Under the assumptions of Theorem \ref{2.2} with $x=\sqrt{ab},\ \lambda =0,$%
from the inequality (\ref{2-3}) we get the following midpoint type
inequality or fractional integrals%
\begin{eqnarray*}
&&\left\vert f\left( \sqrt{ab}\right) -\frac{2^{\alpha -1}\Gamma \left(
\alpha +1\right) }{\left( \ln \frac{b}{a}\right) ^{\alpha }}\left[ J_{\left(
\ln \sqrt{ab}\right) ^{-}}^{\alpha }\left( f\circ \exp \right) (\ln
a)+J_{\left( \ln \sqrt{ab}\right) ^{+}}^{\alpha }\left( f\circ \exp \right)
(\ln b)\right] \right\vert \\
&\leq &\frac{\ln \frac{b}{a}}{4}\left( \frac{1}{\alpha +1}\right) ^{1-\frac{1%
}{q}}\left\{ a\left[ \sup \left\{ \left\vert f^{\prime }\left( \sqrt{ab}%
\right) \right\vert ,\left\vert f^{\prime }\left( a\right) \right\vert
\right\} \right] ^{\frac{1}{q}}B_{1}^{\frac{1}{q}}\left( \sqrt{ab},\alpha
,0,q\right) \right. \\
&&\left. +b\left[ \sup \left\{ \left\vert f^{\prime }\left( \sqrt{ab}\right)
\right\vert ,\left\vert f^{\prime }\left( b\right) \right\vert \right\} %
\right] ^{\frac{1}{q}}B_{2}^{\frac{1}{q}}\left( \sqrt{ab},\alpha ,0,q\right)
\right\}
\end{eqnarray*}
\end{corollary}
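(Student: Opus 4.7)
The plan is to obtain the stated inequality as a direct specialization of Theorem \ref{2.2}: substitute $x=\sqrt{ab}$ and $\lambda=0$ into (\ref{2-3}), then rescale both sides by $2^{\alpha-1}\bigl(\ln\tfrac{b}{a}\bigr)^{-\alpha}$ so that the left-hand side matches the displayed midpoint form. Most of the work is bookkeeping: tracking the powers of $2$ produced by the identity $\ln\tfrac{\sqrt{ab}}{a}=\ln\tfrac{b}{\sqrt{ab}}=\tfrac{1}{2}\ln\tfrac{b}{a}$.

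First I would unfold the definition of $I_f$ at the point $x=\sqrt{ab}$ with $\lambda=0$. The coefficient $\lambda\bigl[f(a)\ln^{\alpha}\tfrac{x}{a}+f(b)\ln^{\alpha}\tfrac{b}{x}\bigr]$ vanishes, and $(1-\lambda)\bigl[\ln^{\alpha}\tfrac{x}{a}+\ln^{\alpha}\tfrac{b}{x}\bigr]f(x)$ collapses to $2\bigl(\tfrac{1}{2}\ln\tfrac{b}{a}\bigr)^{\alpha}f(\sqrt{ab})=2^{1-\alpha}\bigl(\ln\tfrac{b}{a}\bigr)^{\alpha}f(\sqrt{ab})$. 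Multiplying by $2^{\alpha-1}\bigl(\ln\tfrac{b}{a}\bigr)^{-\alpha}$ therefore yields precisely
\[
f(\sqrt{ab})-\frac{2^{\alpha-1}\Gamma(\alpha+1)}{\bigl(\ln\tfrac{b}{a}\bigr)^{\alpha}}\Bigl[J_{(\ln\sqrt{ab})^{-}}^{\alpha}(f\circ\exp)(\ln a)+J_{(\ln\sqrt{ab})^{+}}^{\alpha}(f\circ\exp)(\ln b)\Bigr],
\]
which is the expression inside the absolute value on the left of the corollary.

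Next I would simplify the right-hand side of (\ref{2-3}) under the same substitution. Plugging $\lambda=0$ into $A_{1}(\alpha,\lambda)=\frac{2\alpha\lambda^{1+1/\alpha}+1}{\alpha+1}-\lambda$ gives $A_{1}(\alpha,0)=\frac{1}{\alpha+1}$, producing the prefactor $\bigl(\tfrac{1}{\alpha+1}\bigr)^{1-1/q}$. Since $\bigl(\ln\tfrac{\sqrt{ab}}{a}\bigr)^{\alpha+1}=\bigl(\ln\tfrac{b}{\sqrt{ab}}\bigr)^{\alpha+1}=\bigl(\tfrac{1}{2}\ln\tfrac{b}{a}\bigr)^{\alpha+1}=\frac{1}{2^{\alpha+1}}\bigl(\ln\tfrac{b}{a}\bigr)^{\alpha+1}$, each of the two summands inside the braces picks up the factor $2^{-(\alpha+1)}\bigl(\ln\tfrac{b}{a}\bigr)^{\alpha+1}$. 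Multiplying by the overall rescaling $2^{\alpha-1}\bigl(\ln\tfrac{b}{a}\bigr)^{-\alpha}$ therefore produces the common factor $\frac{2^{\alpha-1}}{2^{\alpha+1}}\ln\tfrac{b}{a}=\frac{1}{4}\ln\tfrac{b}{a}$, which matches the leading $\frac{\ln(b/a)}{4}$ in the corollary.

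The only potential nuisance is the constant tracking; there is no conceptual obstacle, since quasi-geometric convexity has already been exploited in Theorem \ref{2.2}. Once the above two simplifications are combined, the quantities $B_{1}^{1/q}(\sqrt{ab},\alpha,0,q)$ and $B_{2}^{1/q}(\sqrt{ab},\alpha,0,q)$ appear unchanged (they are by definition the integrals $\int_{0}^{1}t^{\alpha}\bigl(\sqrt{a/b}\,\bigr)^{qt}dt$ and $\int_{0}^{1}t^{\alpha}\bigl(\sqrt{b/a}\,\bigr)^{qt}dt$, which do not require further evaluation), and the inequality in the statement follows directly from (\ref{2-3}).
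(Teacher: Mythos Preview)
Your proposal is correct and follows exactly the paper's approach: the corollary is stated without proof there, as an immediate specialization of Theorem~\ref{2.2} at $x=\sqrt{ab}$, $\lambda=0$, after multiplying through by $2^{\alpha-1}\bigl(\ln\tfrac{b}{a}\bigr)^{-\alpha}$. One tiny slip: in your parenthetical description of $B_{1}$ and $B_{2}$ you interchanged the bases---since $x/a=\sqrt{b/a}$ and $x/b=\sqrt{a/b}$, it is $B_{1}(\sqrt{ab},\alpha,0,q)=\int_{0}^{1}t^{\alpha}\bigl(\sqrt{b/a}\bigr)^{qt}\,dt$ and $B_{2}(\sqrt{ab},\alpha,0,q)=\int_{0}^{1}t^{\alpha}\bigl(\sqrt{a/b}\bigr)^{qt}\,dt$---but this does not affect the argument.
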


\begin{corollary}
Under the assumptions of Theorem \ref{2.2} with$\ x=\sqrt{ab}$, $\lambda =1,$%
from the inequality (\ref{2-3}) we get the following trapezoid type
inequality or fractional integrals%
\begin{eqnarray*}
&&\left\vert \frac{f(a)+f(b)}{2}-\frac{2^{\alpha -1}\Gamma \left( \alpha
+1\right) }{\left( \ln \frac{b}{a}\right) ^{\alpha }}\left[ J_{\left( \ln 
\sqrt{ab}\right) ^{-}}^{\alpha }\left( f\circ \exp \right) (\ln a)+J_{\left(
\ln \sqrt{ab}\right) ^{+}}^{\alpha }\left( f\circ \exp \right) (\ln b)\right]
\right\vert \\
&\leq &\frac{\ln \frac{b}{a}}{4}\left( \frac{1}{\alpha +1}\right) ^{1-\frac{1%
}{q}}B_{1}^{\frac{1}{q}}\left( \sqrt{ab},\alpha ,1,q\right) \\
&&\times \left\{ a\left[ \sup \left\{ \left\vert f^{\prime }\left( \sqrt{ab}%
\right) \right\vert ,\left\vert f^{\prime }\left( a\right) \right\vert
\right\} \right] ^{\frac{1}{q}}+b\left[ \sup \left\{ \left\vert f^{\prime
}\left( \sqrt{ab}\right) \right\vert ,\left\vert f^{\prime }\left( b\right)
\right\vert \right\} \right] ^{\frac{1}{q}}\right\}
\end{eqnarray*}
\end{corollary}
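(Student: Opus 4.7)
The plan is to obtain this corollary as a direct specialization of Theorem \ref{2.2}; no new convexity argument is required, only algebraic simplification of the two sides of (\ref{2-3}) under the choices $x=\sqrt{ab}$ and $\lambda=1$. The key observation is that at $x=\sqrt{ab}$ one has the symmetric identity $\ln(x/a)=\ln(b/x)=\tfrac{1}{2}\ln(b/a)$, so the two halves of $I_f$ collapse into a common logarithmic factor.

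First I would evaluate $I_{f}(\sqrt{ab},1,\alpha,a,b)$ directly from its defining formula. With $\lambda=1$ the $(1-\lambda)$-term vanishes, leaving only $f(a)\ln^{\alpha}(x/a)+f(b)\ln^{\alpha}(b/x)$ minus the weighted Riemann--Liouville integrals; substituting $\ln(x/a)=\ln(b/x)=\tfrac{1}{2}\ln(b/a)$ turns this into $2^{-\alpha}(\ln\tfrac{b}{a})^{\alpha}\bigl[f(a)+f(b)\bigr]$ minus the same fractional integral piece. Multiplying through by $2^{\alpha-1}(\ln\tfrac{b}{a})^{-\alpha}$ then yields exactly the left-hand side of the corollary, namely $\tfrac{f(a)+f(b)}{2}$ minus the $2^{\alpha-1}\Gamma(\alpha+1)(\ln\tfrac{b}{a})^{-\alpha}$-scaled sum of fractional integrals.

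For the right-hand side I would apply the bound from Theorem \ref{2.2} and specialize. Computing $A_{1}(\alpha,1)=\tfrac{2\alpha+1}{\alpha+1}-1=\tfrac{\alpha}{\alpha+1}$ (which is the correct specialization of $A_1$, in line with Corollary \ref{2.1.1a}) takes care of the power-mean prefactor. The factors $(\ln(x/a))^{\alpha+1}$ and $(\ln(b/x))^{\alpha+1}$ both collapse to $2^{-(\alpha+1)}(\ln\tfrac{b}{a})^{\alpha+1}$, so after multiplying the whole estimate by the same normalization $2^{\alpha-1}(\ln\tfrac{b}{a})^{-\alpha}$ used on the left, the geometric prefactor becomes $\tfrac{\ln(b/a)}{4}$. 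The integrals $B_{1}(\sqrt{ab},\alpha,1,q)=\int_{0}^{1}|t^{\alpha}-1|(b/a)^{qt/2}\,dt$ and $B_{2}(\sqrt{ab},\alpha,1,q)=\int_{0}^{1}|t^{\alpha}-1|(a/b)^{qt/2}\,dt$ are retained in their defined form and attached to the respective $\sup$-terms.

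There is no genuine analytic obstacle; the entire argument is a careful bookkeeping of exponents, of the powers of $2$ arising from $x=\sqrt{ab}$, and of the normalization $2^{\alpha-1}(\ln\tfrac{b}{a})^{-\alpha}$. The only point at which one must be watchful is that $B_{1}$ and $B_{2}$ remain distinct at $x=\sqrt{ab}$ (one has $x/a=\sqrt{b/a}$ and $x/b=\sqrt{a/b}$), so each term in the bound should keep its own $B_j^{1/q}$ factor; aside from that exponent arithmetic, the corollary is an immediate consequence of Theorem \ref{2.2}.
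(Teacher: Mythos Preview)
Your approach is correct and matches exactly what the paper intends: the corollary is stated without proof as a direct specialization of Theorem \ref{2.2} at $x=\sqrt{ab}$, $\lambda=1$, and your bookkeeping of the left-hand side and of the normalization $2^{\alpha-1}(\ln\tfrac{b}{a})^{-\alpha}$ is accurate.

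In fact your two cautionary remarks catch slips in the paper's printed statement. First, the prefactor should indeed be $\left(\tfrac{\alpha}{\alpha+1}\right)^{1-1/q}$, since $A_{1}(\alpha,1)=\tfrac{\alpha}{\alpha+1}$ (consistent with Corollary \ref{2.1.1a}), not $\left(\tfrac{1}{\alpha+1}\right)^{1-1/q}$ as printed. Second, the two bracketed terms should carry $B_{1}^{1/q}(\sqrt{ab},\alpha,1,q)$ and $B_{2}^{1/q}(\sqrt{ab},\alpha,1,q)$ separately rather than a common $B_{1}^{1/q}$ factor; since $(x/b)^{qt}\le (x/a)^{qt}$ gives $B_{2}\le B_{1}$, pulling out $B_{1}^{1/q}$ still yields a valid but weaker bound, so the paper's simplification is defensible only at the cost of sharpness.
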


\begin{corollary}
Let the assumptions of Theorem \ref{2.2} hold. If $\ \left\vert f^{\prime
}(x)\right\vert \leq M$ for all $x\in \left[ a,b\right] $ and $\lambda =0,$
then from the inequality (\ref{2-3}) we get the following Ostrowski type
inequality or fractional integrals%
\begin{eqnarray*}
&&\left\vert \left[ \left( \ln \frac{x}{a}\right) ^{\alpha }+\left( \ln 
\frac{b}{x}\right) ^{\alpha }\right] f(x)-\Gamma \left( \alpha +1\right) %
\left[ J_{\left( \ln x\right) ^{-}}^{\alpha }\left( f\circ \exp \right) (\ln
a)+J_{\left( \ln x\right) ^{+}}^{\alpha }\left( f\circ \exp \right) (\ln b)%
\right] \right\vert \\
&\leq &\frac{M}{\left( \alpha +1\right) ^{1-\frac{1}{q}}}\left[ a\left( \ln 
\frac{x}{a}\right) ^{\alpha +1}B_{1}^{\frac{1}{q}}\left( x,\alpha
,0,q\right) +b\left( \ln \frac{b}{x}\right) ^{\alpha +1}B_{2}^{\frac{1}{q}%
}\left( x,\alpha ,0,q\right) \right] ,
\end{eqnarray*}
\end{corollary}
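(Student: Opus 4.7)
The plan is to derive this Corollary as a direct specialization of Theorem~\ref{2.2}. The two inputs are setting $\lambda=0$ in inequality (\ref{2-3}) and replacing every occurrence of $|f'(\cdot)|^{q}$ by the uniform bound $M^{q}$.

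First I would substitute $\lambda=0$ on the left-hand side. From the definition of $I_{f}$, the term $\lambda[f(a)\ln^{\alpha}(x/a)+f(b)\ln^{\alpha}(b/x)]$ vanishes and the prefactor $(1-\lambda)$ becomes $1$, so that
\[
I_{f}(x,0,\alpha,a,b)=\left[\ln^{\alpha}\tfrac{x}{a}+\ln^{\alpha}\tfrac{b}{x}\right]f(x)-\Gamma(\alpha+1)\left[J_{(\ln x)^{-}}^{\alpha}(f\circ\exp)(\ln a)+J_{(\ln x)^{+}}^{\alpha}(f\circ\exp)(\ln b)\right],
\]
which is exactly the expression inside the absolute value on the left of the Corollary.

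Next, on the right-hand side I would compute $A_{1}(\alpha,0)$ directly from its formula: since $2\alpha\cdot 0^{1+1/\alpha}+1=1$, we get $A_{1}(\alpha,0)=\tfrac{1}{\alpha+1}$, and therefore $A_{1}^{1-1/q}(\alpha,0)=\left(\tfrac{1}{\alpha+1}\right)^{1-1/q}$. This produces the prefactor $M(\alpha+1)^{-(1-1/q)}$ once we also pull the $M$ from the supremum bound out in front.

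Finally, using the hypothesis $|f'(x)|\leq M$ for every $x\in[a,b]$, both suprema in (\ref{2-3}) satisfy
\[
\sup\{|f'(x)|^{q},|f'(a)|^{q}\}\leq M^{q},\qquad \sup\{|f'(x)|^{q},|f'(b)|^{q}\}\leq M^{q},
\]
so their $q$-th roots are at most $M$. Substituting these bounds into (\ref{2-3}) and factoring $M$ out of the braces gives the claimed inequality. There is no genuine obstacle here: the whole argument is a direct specialization of Theorem~\ref{2.2}, with the only care needed in bookkeeping the $\lambda=0$ reduction of $A_{1}$ and of $I_{f}$.
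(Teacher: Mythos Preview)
Your proposal is correct and follows exactly the approach implicit in the paper: the Corollary is obtained from Theorem~\ref{2.2} by taking $\lambda=0$ (which reduces $I_{f}$ to the displayed left-hand side and gives $A_{1}(\alpha,0)=1/(\alpha+1)$) and bounding both suprema by $M^{q}$ via the hypothesis $|f'(x)|\leq M$. The paper provides no separate proof, so your specialization is precisely what is intended.
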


\subsection{For $\left( s,m\right) $-convex functions}

Similarly lemma \ref{2.1}, we can proved the folllowing lemma.

\begin{lemma}
\label{2.1a}Let $f:I\subseteq \left( 0,\infty \right) \rightarrow 
%TCIMACRO{\U{211d} }%
%BeginExpansion
\mathbb{R}
%EndExpansion
$ be a differentiable function on $I^{\circ }$ such that $f^{\prime }\in
L[a^{m},b^{m}]$, where $a^{m},b\in I$ with $a<b$. Then for all $x\in \lbrack
a,b]$, $\lambda \in \left[ 0,1\right] $ and $\alpha >0$ we have:%
\begin{eqnarray*}
&&I_{f}\left( x^{m},\lambda ,\alpha ,a^{m},b^{m}\right) =m^{\alpha
+1}a^{m}\left( \ln \frac{x}{a}\right) ^{\alpha +1}\dint\limits_{0}^{1}\left(
t^{\alpha }-\lambda \right) \left( \frac{x}{a}\right) ^{mt}f^{\prime }\left(
x^{mt}a^{m\left( 1-t\right) }\right) dt \\
&&+m^{\alpha +1}b^{m}\left( \ln \frac{b}{x}\right) ^{\alpha
+1}\dint\limits_{0}^{1}\left( t^{\alpha }-\lambda \right) \left( \frac{x}{b}%
\right) ^{mt}f^{\prime }\left( x^{mt}b^{m\left( 1-t\right) }\right) dt.
\end{eqnarray*}
\end{lemma}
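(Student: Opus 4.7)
My plan is to derive Lemma~\ref{2.1a} as a direct consequence of Lemma~\ref{2.1} by applying it after the change of variables $a\mapsto a^m$, $b\mapsto b^m$, $x\mapsto x^m$. Since $0<a<b$ and $m\in(0,1]$, the transformed endpoints still satisfy $0<a^m<b^m$ with $x^m\in[a^m,b^m]$, and the standing hypotheses on $f$ (differentiability on $I^{\circ}$ and $f'\in L[a^m,b^m]$) are exactly what Lemma~\ref{2.1} requires of the new triple. Thus Lemma~\ref{2.1} applies verbatim at the point $x^m$ with endpoints $a^m$ and $b^m$.

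After the substitution, the work reduces to recognizing the algebraic simplifications
\[
\ln\frac{x^m}{a^m}=m\ln\frac{x}{a},\qquad \Bigl(\frac{x^m}{a^m}\Bigr)^{t}=\Bigl(\frac{x}{a}\Bigr)^{mt},\qquad (x^{m})^{t}(a^{m})^{1-t}=x^{mt}a^{m(1-t)},
\]
and the analogous identities with $b$ in place of $a$. The factor $(\ln(x^{m}/a^{m}))^{\alpha+1}=m^{\alpha+1}(\ln(x/a))^{\alpha+1}$ is precisely where the overall multiplier $m^{\alpha+1}$ appears in each of the two integrals. Substituting these into the conclusion of Lemma~\ref{2.1} then produces the identity stated in Lemma~\ref{2.1a} (matching Lemma~\ref{2.1}, the second integral actually carries a minus sign).

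If one prefers to avoid invoking Lemma~\ref{2.1} and instead follow the author's suggestion of mimicking that proof, the steps are: compute
\[
\frac{d}{dt}f\!\left(x^{mt}a^{m(1-t)}\right)=m\ln\frac{x}{a}\cdot a^{m}\Bigl(\frac{x}{a}\Bigr)^{mt} f'\!\left(x^{mt}a^{m(1-t)}\right),
\]
integrate by parts in $\int_{0}^{1}(t^{\alpha}-\lambda)\,df(x^{mt}a^{m(1-t)})$, and then perform the substitutions $u=x^{mt}a^{m(1-t)}$ followed by $s=\ln u$ to convert the remaining integral into $\Gamma(\alpha+1)J^{\alpha}_{(\ln x^{m})^{-}}(f\circ\exp)(\ln a^{m})$. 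The same argument with $b$ in place of $a$ handles the other term, and the degenerate cases $x=a$ and $x=b$ are treated exactly as in Lemma~\ref{2.1}.

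I expect no substantive obstacle: every step is just a rescaled copy of the previous proof. The only bookkeeping issue is tracking the extra factor of $m$ arising from differentiating $x^{mt}a^{m(1-t)}$ in $t$, which accumulates into the overall $m^{\alpha+1}$ after multiplying through by $(\ln(x^{m}/a^{m}))^{\alpha}=m^{\alpha}(\ln(x/a))^{\alpha}$.
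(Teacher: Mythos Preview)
Your proposal is correct and matches the paper's approach: the paper gives no proof beyond the remark ``Similarly lemma~\ref{2.1}, we can prove the following lemma,'' and both of your routes (direct substitution of $a\mapsto a^{m}$, $b\mapsto b^{m}$, $x\mapsto x^{m}$ into Lemma~\ref{2.1}, or redoing the integration by parts) carry this out, with the substitution argument being the cleanest realization. You are also right that the second integral should carry a minus sign as in Lemma~\ref{2.1}; the plus sign in the stated lemma is a typographical slip in the paper.
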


\begin{theorem}
\label{2.3}Let $f:I\subseteq \left( 0,\infty \right) \rightarrow 
%TCIMACRO{\U{211d} }%
%BeginExpansion
\mathbb{R}
%EndExpansion
$ be a differentiable function on $I^{\circ }$ such that $f^{\prime }\in
L[a^{m},b^{m}]$, where $a^{m},b\in I$ $^{\circ }$ with $a<b$. If $|f^{\prime
}|^{q}$ is $\left( s,m\right) $-convex on $[a^{m},b]$ for some fixed $q\geq
1 $, $x\in \lbrack a,b]$, $\lambda \in \left[ 0,1\right] $ and $\alpha >0$
then the following inequality for fractional integrals holds%
\begin{eqnarray}
&&\left\vert I_{f}\left( x^{m},\lambda ,\alpha ,a^{m},b^{m}\right)
\right\vert \leq m^{\alpha +1}A_{1}\left( \alpha ,\lambda \right) ^{1-\frac{1%
}{q}}  \label{2-4} \\
&&\times \left\{ a^{m}\left( \ln \frac{x}{a}\right) ^{\alpha +1}\left(
\left\vert f^{\prime }\left( x^{m}\right) \right\vert ^{q}C_{1}\left(
x,\alpha ,\lambda ,q,m,s\right) +m\left\vert f^{\prime }\left( a\right)
\right\vert ^{q}C_{2}\left( x,\alpha ,\lambda ,q,m,s\right) \right) ^{\frac{1%
}{q}}\right.  \notag \\
&&\left. +b^{m}\left( \ln \frac{b}{x}\right) ^{\alpha +1}\left( \left\vert
f^{\prime }\left( x^{m}\right) \right\vert ^{q}C_{3}\left( x,\alpha ,\lambda
,q,m,s\right) +m\left\vert f^{\prime }\left( b\right) \right\vert
^{q}C_{4}\left( x,\alpha ,\lambda ,q,m,s\right) \right) ^{\frac{1}{q}%
}\right\}  \notag
\end{eqnarray}%
where 
\begin{eqnarray*}
A_{1}\left( \alpha ,\lambda \right) &=&\frac{2\alpha \lambda ^{1+\frac{1}{%
\alpha }}+1}{\alpha +1}-\lambda , \\
C_{1}\left( x,\alpha ,\lambda ,q,m,s\right)
&=&\dint\limits_{0}^{1}\left\vert t^{\alpha }-\lambda \right\vert \left( 
\frac{x}{a}\right) ^{qmt}t^{s}dt, \\
C_{2}\left( x,\alpha ,\lambda ,q,m,s\right)
&=&\dint\limits_{0}^{1}\left\vert t^{\alpha }-\lambda \right\vert \left( 
\frac{x}{a}\right) ^{qmt}\left( 1-t^{s}\right) dt, \\
C_{3}\left( x,\alpha ,\lambda ,q,m,s\right)
&=&\dint\limits_{0}^{1}\left\vert t^{\alpha }-\lambda \right\vert \left( 
\frac{x}{b}\right) ^{qmt}t^{s}dt, \\
C_{4}\left( x,\alpha ,\lambda ,q,m,s\right)
&=&\dint\limits_{0}^{1}\left\vert t^{\alpha }-\lambda \right\vert \left( 
\frac{x}{b}\right) ^{qmt}\left( 1-t^{s}\right) dt.
\end{eqnarray*}
\end{theorem}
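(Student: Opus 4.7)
The plan is to follow the template of the proof of Theorem \ref{2.1.1}, but starting from the identity in Lemma \ref{2.1a} (rather than Lemma \ref{2.1}) and substituting the $(s,m)$-GA-convexity inequality in place of the GA-$s$-convex one. First I take absolute values on both sides of the identity in Lemma \ref{2.1a}, which produces
\begin{align*}
\left| I_{f}(x^{m},\lambda,\alpha,a^{m},b^{m}) \right|
&\leq m^{\alpha+1}a^{m}\left(\ln\tfrac{x}{a}\right)^{\alpha+1}\int_{0}^{1}\left|t^{\alpha}-\lambda\right|\left(\tfrac{x}{a}\right)^{mt}\left|f'(x^{mt}a^{m(1-t)})\right|dt \\
&\quad + m^{\alpha+1}b^{m}\left(\ln\tfrac{b}{x}\right)^{\alpha+1}\int_{0}^{1}\left|t^{\alpha}-\lambda\right|\left(\tfrac{x}{b}\right)^{mt}\left|f'(x^{mt}b^{m(1-t)})\right|dt.
\end{align*}

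Second, I apply the power-mean inequality to each of the two remaining integrals with weight $\left|t^{\alpha}-\lambda\right|$: writing $\left|t^{\alpha}-\lambda\right|=\left|t^{\alpha}-\lambda\right|^{1-1/q}\cdot\left|t^{\alpha}-\lambda\right|^{1/q}$ and using H\"older's inequality on $[0,1]$ extracts the common factor $\left(\int_{0}^{1}\left|t^{\alpha}-\lambda\right|dt\right)^{1-1/q}$, which by the direct computation already performed in (\ref{2-2d}) equals $A_{1}(\alpha,\lambda)^{1-1/q}$.

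Third, I bound the inner $q$-th power factors using the $(s,m)$-GA-convexity of $\left|f'\right|^{q}$: since $x^{mt}a^{m(1-t)}=(x^{m})^{t}a^{m(1-t)}$, the definition yields
\begin{equation*}
\left|f'(x^{mt}a^{m(1-t)})\right|^{q}\leq t^{s}\left|f'(x^{m})\right|^{q}+m(1-t^{s})\left|f'(a)\right|^{q},
\end{equation*}
and analogously with $b$ in place of $a$. Substituting these pointwise bounds and splitting each of the two resulting integrals into two pieces recovers exactly the four integrals $C_{1},C_{2},C_{3},C_{4}$ appearing in the statement, and collecting all terms produces (\ref{2-4}).

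The argument is essentially mechanical; the only points requiring attention are (i) correctly tracking the factor $m^{\alpha+1}$ that emerges from Lemma \ref{2.1a}, and (ii) the fact that the $(s,m)$-GA-convexity weight on the second argument is $m(1-t^{s})$ rather than the $(1-t)^{s}$ appearing in the GA-$s$-convex case---this explains the leading $m$ in front of $\left|f'(a)\right|^{q}$ and $\left|f'(b)\right|^{q}$ in (\ref{2-4}) and produces $C_{2},C_{4}$ with the factor $(1-t^{s})$ rather than the $(1-t)^{s}$ appearing in $A_{3},A_{5}$ of Theorem \ref{2.1.1}.
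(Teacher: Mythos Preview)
Your proposal is correct and follows essentially the same approach as the paper's own proof: start from the identity in Lemma \ref{2.1a}, take absolute values, apply the power-mean inequality with weight $\left|t^{\alpha}-\lambda\right|$ (yielding the factor $A_{1}(\alpha,\lambda)^{1-1/q}$ via (\ref{2-2d})), then invoke the $(s,m)$-GA-convexity of $\left|f'\right|^{q}$ to produce the bounds (\ref{2-4b}) and (\ref{2-4c}) and collect terms into $C_{1},\dots,C_{4}$. Your remarks on tracking the factor $m^{\alpha+1}$ and on the distinction between $(1-t^{s})$ and $(1-t)^{s}$ are exactly the points of difference from Theorem \ref{2.1.1}.
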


\begin{proof}
We proceed similarly as in the proof Theorem \ref{2.1.1}. From Lemma \ref%
{2.1a}, property of the modulus and using the power-mean inequality we have%
\begin{eqnarray*}
&&I_{f}\left( x^{m},\lambda ,\alpha ,a^{m},b^{m}\right) =m^{\alpha
+1}a^{m}\left( \ln \frac{x}{a}\right) ^{\alpha +1}\dint\limits_{0}^{1}\left(
t^{\alpha }-\lambda \right) \left( \frac{x}{a}\right) ^{mt}f^{\prime }\left(
x^{mt}a^{m\left( 1-t\right) }\right) dt \\
&&+m^{\alpha +1}b^{m}\left( \ln \frac{b}{x}\right) ^{\alpha
+1}\dint\limits_{0}^{1}\left( t^{\alpha }-\lambda \right) \left( \frac{x}{b}%
\right) ^{mt}f^{\prime }\left( x^{mt}b^{m\left( 1-t\right) }\right) dt.
\end{eqnarray*}%
\begin{equation*}
\left\vert I_{f}\left( x^{m},\lambda ,\alpha ,a^{m},b^{m}\right) \right\vert
\leq m^{\alpha +1}\left( \dint\limits_{0}^{1}\left\vert t^{\alpha }-\lambda
\right\vert dt\right) ^{1-\frac{1}{q}}
\end{equation*}%
\begin{eqnarray}
&&\times \left\{ a^{m}\left( \ln \frac{x}{a}\right) ^{\alpha +1}\left(
\dint\limits_{0}^{1}\left\vert t^{\alpha }-\lambda \right\vert \left( \frac{x%
}{a}\right) ^{qmt}\left\vert f^{\prime }\left( x^{mt}a^{m\left( 1-t\right)
}\right) \right\vert ^{q}dt\right) ^{\frac{1}{q}}\right.  \notag \\
&&\left. +b^{m}\left( \ln \frac{b}{x}\right) ^{\alpha +1}\left(
\dint\limits_{0}^{1}\left\vert t^{\alpha }-\lambda \right\vert \left( \frac{x%
}{b}\right) ^{qmt}\left\vert f^{\prime }\left( x^{mt}b^{m\left( 1-t\right)
}\right) \right\vert ^{q}dt\right) ^{\frac{1}{q}}\right\} .  \label{2-4a}
\end{eqnarray}%
Since $|f^{\prime }|^{q}$ is $\left( s,m\right) $-convex on $[a^{m},b],$ for
all $t\in \left[ 0,1\right] $%
\begin{equation}
\left\vert f^{\prime }\left( x^{mt}a^{m\left( 1-t\right) }\right)
\right\vert ^{q}\leq t^{s}\left\vert f^{\prime }\left( x^{m}\right)
\right\vert ^{q}+m\left( 1-t^{s}\right) \left\vert f^{\prime }\left(
a\right) \right\vert ^{q},  \label{2-4b}
\end{equation}%
\begin{equation}
\left\vert f^{\prime }\left( x^{mt}b^{m\left( 1-t\right) }\right)
\right\vert ^{q}\leq t^{s}\left\vert f^{\prime }\left( x^{m}\right)
\right\vert ^{q}+m\left( 1-t^{s}\right) \left\vert f^{\prime }\left(
b\right) \right\vert ^{q}.  \label{2-4c}
\end{equation}%
If we use (\ref{2-4b}), (\ref{2-4c}) and (\ref{2-2d}) in (\ref{2-4a}), we
obtain the desired result. This completes the proof.
\end{proof}

\begin{corollary}
Under the assumptions of Theorem \ref{2.3} with $q=1,$ the inequality (\ref%
{2-4}) reduced to the following inequality%
\begin{eqnarray*}
&&I_{f}\left( x^{m},\lambda ,\alpha ,a^{m},b^{m}\right) \leq m^{\alpha +1} \\
&&\times \left\{ a^{m}\left( \ln \frac{x}{a}\right) ^{\alpha +1}\left(
\left\vert f^{\prime }\left( x^{m}\right) \right\vert C_{1}\left( x,\alpha
,\lambda ,1,m,s\right) +m\left\vert f^{\prime }\left( a\right) \right\vert
C_{2}\left( x,\alpha ,\lambda ,1,m,s\right) \right) \right. \\
&&\left. +b^{m}\left( \ln \frac{b}{x}\right) ^{\alpha +1}\left( \left\vert
f^{\prime }\left( x^{m}\right) \right\vert C_{3}\left( x,\alpha ,\lambda
,1,m,s\right) +m\left\vert f^{\prime }\left( b\right) \right\vert
C_{4}\left( x,\alpha ,\lambda ,1,m,s\right) \right) \right\}
\end{eqnarray*}
\end{corollary}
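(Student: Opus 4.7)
The plan is to observe that this statement is a direct specialization of Theorem \ref{2.3} obtained by setting $q=1$ in inequality (\ref{2-4}), so no new analytic work is required; the task is purely to verify that the right-hand side of (\ref{2-4}) collapses to the stated expression.

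First I would invoke Theorem \ref{2.3} verbatim with the given hypotheses on $f$ and with $q=1$. The left-hand side $\left\vert I_{f}(x^{m},\lambda ,\alpha ,a^{m},b^{m})\right\vert $ is unchanged. On the right-hand side, two simplifications occur simultaneously: the prefactor $A_{1}(\alpha ,\lambda)^{1-1/q}$ becomes $A_{1}(\alpha ,\lambda)^{0}=1$, eliminating the $A_{1}$ term entirely; and the outer exponents $1/q$ attached to the two parenthesized sums become $1$, so the convex combinations of $|f'(x^{m})|^{q}$ and $|f'(a)|^{q}$ (resp.\ $|f'(b)|^{q}$) reduce to linear combinations of $|f'(x^{m})|$ and $|f'(a)|$ (resp.\ $|f'(b)|$).

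Next I would record that the auxiliary functions $C_{1},C_{2},C_{3},C_{4}$ keep their integral definitions as given in Theorem \ref{2.3}, with the parameter $q$ set to $1$; that is, $C_{i}(x,\alpha ,\lambda ,1,m,s)$ is just the corresponding integral $\int_{0}^{1}|t^{\alpha}-\lambda |(x/a)^{mt}t^{s}\,dt$ or its analogue, without any further reduction. No further manipulation of the $C_{i}$ is needed, since the corollary's statement already uses these functions with the fourth slot filled by $1$.

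There is no substantive obstacle: the argument is a one-line substitution into an already-proved inequality. The only care needed is bookkeeping, namely confirming that the exponent on $m$ remains $\alpha +1$, that the terms $m|f'(a)|$ and $m|f'(b)|$ carry the factor of $m$ inherited from the $(s,m)$-GA-convexity bounds (\ref{2-4b}) and (\ref{2-4c}), and that no absolute-value bars are omitted on the left-hand side (as written, the statement displays $I_{f}(\cdots)$ rather than $|I_{f}(\cdots)|$, but the bound of course applies to the modulus inherited from Theorem \ref{2.3}). With these bookkeeping checks done, the corollary follows immediately.
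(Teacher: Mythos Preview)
Your proposal is correct and matches the paper's treatment: the corollary is stated without proof in the paper, as it is simply the specialization $q=1$ of inequality (\ref{2-4}), and your bookkeeping of the collapsing prefactor $A_{1}^{1-1/q}\to 1$ and exponents $1/q\to 1$ is exactly what is needed. Your observation about the missing modulus on the left-hand side is also accurate; the bound indeed applies to $|I_{f}(\cdots)|$.
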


\begin{corollary}
Under the assumptions of Theorem \ref{2.3} with $x=\sqrt{ab},\ \lambda =%
\frac{1}{3},$from the inequality (\ref{2-4}) we get the following Simpson
type inequality or fractional integrals%
\begin{eqnarray*}
&&2^{\alpha -1}\left( m\ln \frac{b}{a}\right) ^{-\alpha }\left\vert
I_{f}\left( \left( \sqrt{ab}\right) ^{m},\frac{1}{3},\alpha
,a^{m},b^{m}\right) \right\vert \\
&=&\left\vert \frac{1}{6}\left[ f(a^{m})+4f\left( \left( \sqrt{ab}\right)
^{m}\right) +f(b^{m})\right] -\frac{2^{\alpha -1}\Gamma \left( \alpha
+1\right) }{\left( m\ln \frac{b}{a}\right) ^{\alpha }}\right. \\
&&\times \left. \left[ J_{\left( m\ln \sqrt{ab}\right) ^{-}}^{\alpha }\left(
f\circ \exp \right) (m\ln a)+J_{\left( m\ln \sqrt{ab}\right) ^{+}}^{\alpha
}\left( f\circ \exp \right) (m\ln b)\right] \right\vert
\end{eqnarray*}%
\begin{eqnarray*}
&&\leq \frac{m\ln \frac{b}{a}}{4}A_{1}^{1-\frac{1}{q}}\left( \alpha ,\frac{1%
}{3}\right) \\
&&\times \left\{ \left( \left\vert f^{\prime }\left( \left( \sqrt{ab}\right)
^{m}\right) \right\vert ^{q}C_{1}\left( \sqrt{ab},\alpha ,\frac{1}{3}%
,q,m,s\right) +m\left\vert f^{\prime }\left( a\right) \right\vert
^{q}C_{2}\left( \sqrt{ab},\alpha ,\frac{1}{3},q,m,s\right) \right) ^{\frac{1%
}{q}}\right. \\
&&\left. +\left( \left\vert f^{\prime }\left( \left( \sqrt{ab}\right)
^{m}\right) \right\vert ^{q}C_{3}\left( \sqrt{ab},\alpha ,\frac{1}{3}%
,q,m,s\right) +m\left\vert f^{\prime }\left( b\right) \right\vert
^{q}C_{4}\left( \sqrt{ab},\alpha ,\frac{1}{3},q,m,s\right) \right) ^{\frac{1%
}{q}}\right\} .
\end{eqnarray*}
\end{corollary}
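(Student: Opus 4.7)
The plan is to derive this corollary as a direct specialization of Theorem \ref{2.3}, substituting $x=\sqrt{ab}$ and $\lambda=\tfrac{1}{3}$ into inequality (\ref{2-4}) and then normalizing both sides by the factor $2^{\alpha-1}(m\ln\tfrac{b}{a})^{-\alpha}$ so that the left-hand side assumes the familiar Simpson form.

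First I would record the basic logarithmic identities that drive every simplification: since $x=\sqrt{ab}$, one has $\ln\tfrac{x}{a}=\ln\tfrac{b}{x}=\tfrac{1}{2}\ln\tfrac{b}{a}$, and consequently $\ln^{\alpha}\tfrac{x^m}{a^m}=\ln^{\alpha}\tfrac{b^m}{x^m}=(m/2)^{\alpha}\ln^{\alpha}\tfrac{b}{a}$ and $(\ln\tfrac{x}{a})^{\alpha+1}=(\ln\tfrac{b}{x})^{\alpha+1}=(\ln\tfrac{b}{a})^{\alpha+1}/2^{\alpha+1}$.

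Next I would expand the left-hand side using the definition of $I_{f}(x^m,\lambda,\alpha,a^m,b^m)$. Plugging in $\lambda=\tfrac{1}{3}$, the coefficient of $f((\sqrt{ab})^m)$ produced by the $(1-\lambda)[\ln^{\alpha}\tfrac{x^m}{a^m}+\ln^{\alpha}\tfrac{b^m}{x^m}]$ term becomes $\tfrac{2}{3}\cdot 2(m/2)^{\alpha}\ln^{\alpha}\tfrac{b}{a}$, while the $f(a^m)+f(b^m)$ terms acquire the common coefficient $\tfrac{1}{3}(m/2)^{\alpha}\ln^{\alpha}\tfrac{b}{a}$. Multiplying through by $2^{\alpha-1}(m\ln\tfrac{b}{a})^{-\alpha}$ collapses these to $\tfrac{4}{6}$ and $\tfrac{1}{6}$ respectively, producing exactly the Simpson combination $\tfrac{1}{6}[f(a^m)+4f((\sqrt{ab})^m)+f(b^m)]$, while the two Riemann--Liouville pieces combine into the stated prefactor $\tfrac{2^{\alpha-1}\Gamma(\alpha+1)}{(m\ln\tfrac{b}{a})^{\alpha}}$.

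For the right-hand side I would observe that the same normalizing factor acts on $m^{\alpha+1}a^{m}(\ln\tfrac{x}{a})^{\alpha+1}$ (and on the corresponding $b$-term) via
\[
2^{\alpha-1}\bigl(m\ln\tfrac{b}{a}\bigr)^{-\alpha}\cdot m^{\alpha+1}\cdot \frac{(\ln\tfrac{b}{a})^{\alpha+1}}{2^{\alpha+1}}=\frac{m\ln\tfrac{b}{a}}{4},
\]
which is exactly the leading factor in the corollary. The $A_{1}(\alpha,\tfrac{1}{3})^{1-1/q}$ factor and the four integrals $C_{i}(\sqrt{ab},\alpha,\tfrac{1}{3},q,m,s)$ then appear by direct substitution, and the derivative values $|f'((\sqrt{ab})^m)|$, $|f'(a)|$, $|f'(b)|$ are inherited verbatim from (\ref{2-4}).

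The only real work is bookkeeping: keeping track of the competing powers of $2$, $m$, and $\ln\tfrac{b}{a}$ as they cancel between the normalizing prefactor and the specialized integrand. There is no conceptual obstacle beyond the careful verification that the coefficients $\tfrac{2}{3}$ and $\tfrac{1}{3}$ coming from $\lambda=\tfrac{1}{3}$, together with the doubling caused by $\ln\tfrac{x}{a}=\ln\tfrac{b}{x}$, assemble correctly into the Simpson weights $(1,4,1)/6$.
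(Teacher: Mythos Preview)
Your approach is exactly the one the paper intends: the corollary carries no separate proof there either, and is obtained by the same direct substitution of $x=\sqrt{ab}$, $\lambda=\tfrac13$ into Theorem~\ref{2.3} followed by division by $(m\ln\tfrac{b}{a})^{\alpha}/2^{\alpha-1}$. Your handling of the left-hand side and the collapse to the Simpson weights $(1,4,1)/6$ is correct.

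There is, however, one bookkeeping slip on the right-hand side that you should not gloss over. You correctly write that the normalizing factor acts on $m^{\alpha+1}a^{m}(\ln\tfrac{x}{a})^{\alpha+1}$, but in the displayed computation
\[
2^{\alpha-1}\bigl(m\ln\tfrac{b}{a}\bigr)^{-\alpha}\cdot m^{\alpha+1}\cdot \frac{(\ln\tfrac{b}{a})^{\alpha+1}}{2^{\alpha+1}}=\frac{m\ln\tfrac{b}{a}}{4}
\]
the factor $a^{m}$ (and analogously $b^{m}$ in the second summand) has silently vanished. Carried through correctly, the right-hand side of the specialized inequality is $\dfrac{m\ln\tfrac{b}{a}}{4}\,A_{1}^{1-1/q}\!\bigl\{a^{m}(\cdots)^{1/q}+b^{m}(\cdots)^{1/q}\bigr\}$, exactly as in the analogous Simpson corollary after Theorem~\ref{2.1.1} (where the factors $a$ and $b$ are retained). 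The stated corollary omits these $a^{m}$, $b^{m}$ multipliers, which appears to be a typographical slip in the paper rather than a genuine cancellation; your proof should produce them, not suppress them.
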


\begin{corollary}
Under the assumptions of Theorem \ref{2.3} with $x=\sqrt{ab},\ \lambda =0,$%
from the inequality (\ref{2-4}) we get the following inequality%
\begin{eqnarray*}
&&2^{\alpha -1}\left( m\ln \frac{b}{a}\right) ^{-\alpha }\left\vert
I_{f}\left( \left( \sqrt{ab}\right) ^{m},0,\alpha ,a^{m},b^{m}\right)
\right\vert \\
&=&\left\vert f\left( \left( \sqrt{ab}\right) ^{m}\right) -\frac{2^{\alpha
-1}\Gamma \left( \alpha +1\right) }{\left( m\ln \frac{b}{a}\right) ^{\alpha }%
}\left[ J_{\left( m\ln \sqrt{ab}\right) ^{-}}^{\alpha }\left( f\circ \exp
\right) (m\ln a)+J_{\left( m\ln \sqrt{ab}\right) ^{+}}^{\alpha }\left(
f\circ \exp \right) (m\ln b)\right] \right\vert \\
&\leq &\frac{m\ln \frac{b}{a}}{4}\left( \frac{1}{\alpha +1}\right) ^{1-\frac{%
1}{q}}\left\{ \left[ \left\vert f^{\prime }\left( \left( \sqrt{ab}\right)
^{m}\right) \right\vert ^{q}C_{1}\left( \sqrt{ab},\alpha ,0,q,m,s\right)
+m\left\vert f^{\prime }\left( a\right) \right\vert ^{q}C_{2}\left( \sqrt{ab}%
,\alpha ,0,q,m,s\right) \right] ^{\frac{1}{q}}\right. \\
&&\left. +\left[ \left\vert f^{\prime }\left( \left( \sqrt{ab}\right)
^{m}\right) \right\vert ^{q}C_{3}\left( \sqrt{ab},\alpha ,0,q,m,s\right)
+m\left\vert f^{\prime }\left( b\right) \right\vert ^{q}C_{4}\left( \sqrt{ab}%
,\alpha ,0,q,m,s\right) \right] ^{\frac{1}{q}}\right\} .
\end{eqnarray*}
\end{corollary}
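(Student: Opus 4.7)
The proof is a direct specialization of Theorem \ref{2.3} to the two choices $x=\sqrt{ab}$ and $\lambda=0$, followed by a normalization of both sides by the factor $2^{\alpha-1}\left(m\ln(b/a)\right)^{-\alpha}$. There is no new analytic input needed; everything reduces to the bookkeeping of powers of $2$, of $m$, and of $\ln(b/a)$.

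My first step would be to evaluate the two constants that simplify at $\lambda=0$. From its definition, $A_1(\alpha,0)=1/(\alpha+1)$, which accounts for the prefactor $(1/(\alpha+1))^{1-1/q}$ on the right-hand side. Next I would observe that $\ln(\sqrt{ab}/a)=\ln(b/\sqrt{ab})=\tfrac{1}{2}\ln(b/a)$, hence $\ln((\sqrt{ab})^m/a^m)=\ln(b^m/(\sqrt{ab})^m)=\tfrac{m}{2}\ln(b/a)$, so that $\left(\ln(x/a)\right)^{\alpha+1}=\left(\ln(b/x)\right)^{\alpha+1}=(\ln(b/a))^{\alpha+1}/2^{\alpha+1}$ after substitution.

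Next I would unwind the left-hand side of (\ref{2-4}) through the definition of $I_f$. With $\lambda=0$ the boundary-value terms $\lambda[f(a^m)\ln^\alpha(\cdot)+f(b^m)\ln^\alpha(\cdot)]$ vanish and $1-\lambda=1$, so
\[
I_f((\sqrt{ab})^m,0,\alpha,a^m,b^m)
=\frac{(m\ln(b/a))^\alpha}{2^{\alpha-1}}f((\sqrt{ab})^m)-\Gamma(\alpha+1)\Bigl[J_{(m\ln\sqrt{ab})^-}^\alpha(f\circ\exp)(m\ln a)+J_{(m\ln\sqrt{ab})^+}^\alpha(f\circ\exp)(m\ln b)\Bigr].
\]
Multiplying through by $2^{\alpha-1}(m\ln(b/a))^{-\alpha}$ produces exactly the left-hand expression stated in the corollary.

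Finally, on the right-hand side of (\ref{2-4}) I would combine the factors $m^{\alpha+1}$, the $(\ln(b/a))^{\alpha+1}/2^{\alpha+1}$ coming out of each logarithmic factor, and the same multiplier $2^{\alpha-1}(m\ln(b/a))^{-\alpha}$ used for the left-hand side. The algebra gives
\[
\frac{2^{\alpha-1}}{2^{\alpha+1}}\cdot\frac{m^{\alpha+1}}{m^{\alpha}}\cdot\frac{(\ln(b/a))^{\alpha+1}}{(\ln(b/a))^{\alpha}}=\frac{m\ln(b/a)}{4},
\]
which is the stated prefactor. The quantities $C_i(\sqrt{ab},\alpha,0,q,m,s)$ appear unchanged since the $C_i$ depend on $x$ only through $(x/a)^{qmt}$ and $(x/b)^{qmt}$. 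The only step that requires any care is the exponent arithmetic just displayed, so that is where I would double-check the computation; there is no conceptual obstacle.
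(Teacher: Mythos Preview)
Your approach is correct and is exactly what the paper intends: the corollary is stated without proof as a direct substitution of $x=\sqrt{ab}$, $\lambda=0$ into Theorem~\ref{2.3}, followed by normalization by $2^{\alpha-1}(m\ln(b/a))^{-\alpha}$, and your bookkeeping of the powers of $2$, $m$, and $\ln(b/a)$ is accurate.

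One caution: if you carry the right-hand side of (\ref{2-4}) through completely, the two bracketed terms retain the multiplicative factors $a^{m}$ and $b^{m}$ respectively (they are present in Theorem~\ref{2.3} and nothing in the substitution cancels them). The corollary as printed in the paper drops these factors, as do the neighbouring corollaries in the same subsection; this appears to be a typographical slip in the paper rather than an error in your argument. Your computation of the common prefactor $\dfrac{m\ln(b/a)}{4}\left(\dfrac{1}{\alpha+1}\right)^{1-1/q}$ is correct, but the full right-hand side obtained from (\ref{2-4}) should read $a^{m}[\cdots]^{1/q}+b^{m}[\cdots]^{1/q}$ inside the braces.
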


\begin{corollary}
Under the assumptions of Theorem \ref{2.3} with$\ x=\sqrt{ab},\ \lambda =1,$%
from the inequality (\ref{2-4}) we get the following inequality 
\begin{eqnarray*}
&&2^{\alpha -1}\left( m\ln \frac{b}{a}\right) ^{-\alpha }\left\vert
I_{f}\left( \left( \sqrt{ab}\right) ^{m},1,\alpha ,a^{m},b^{m}\right)
\right\vert \\
&=&\left\vert \frac{f(a^{m})+f(b^{m})}{2}-\frac{2^{\alpha -1}\Gamma \left(
\alpha +1\right) }{\left( m\ln \frac{b}{a}\right) ^{\alpha }}\left[
J_{\left( m\ln \sqrt{ab}\right) ^{-}}^{\alpha }\left( f\circ \exp \right)
(m\ln a)+J_{\left( m\ln \sqrt{ab}\right) ^{+}}^{\alpha }\left( f\circ \exp
\right) (m\ln b)\right] \right\vert \\
&\leq &\frac{m\ln \frac{b}{a}}{4}\left( \frac{\alpha }{\alpha +1}\right) ^{1-%
\frac{1}{q}}\left\{ \left[ \left\vert f^{\prime }\left( \left( \sqrt{ab}%
\right) ^{m}\right) \right\vert ^{q}C_{1}\left( \sqrt{ab},\alpha
,1,q,m,s\right) +m\left\vert f^{\prime }\left( a\right) \right\vert
^{q}C_{2}\left( \sqrt{ab},\alpha ,1,q,m,s\right) \right] ^{\frac{1}{q}%
}\right. \\
&&\left. +\left[ \left\vert f^{\prime }\left( \left( \sqrt{ab}\right)
^{m}\right) \right\vert ^{q}C_{3}\left( \sqrt{ab},\alpha ,1,q,m,s\right)
+m\left\vert f^{\prime }\left( b\right) \right\vert ^{q}C_{4}\left( \sqrt{ab}%
,\alpha ,1,q,m,s\right) \right] ^{\frac{1}{q}}\right\} .
\end{eqnarray*}
\end{corollary}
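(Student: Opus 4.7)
The plan is to derive this corollary as a direct specialization of Theorem \ref{2.3} by substituting $x=\sqrt{ab}$ and $\lambda=1$ into the inequality (\ref{2-4}) and then normalizing by the factor $2^{\alpha-1}(m\ln\frac{b}{a})^{-\alpha}$. No new analytic work is required; everything reduces to bookkeeping of logarithms and powers.

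First I would evaluate the left-hand side. Substituting $x=\sqrt{ab}$ gives $\ln\frac{x}{a}=\ln\frac{b}{x}=\tfrac{1}{2}\ln\frac{b}{a}$, so after raising to $x^m$ one has $\ln\frac{x^m}{a^m}=\ln\frac{b^m}{x^m}=\tfrac{m}{2}\ln\frac{b}{a}$, and hence $\ln^{\alpha}\frac{x^m}{a^m}=\ln^{\alpha}\frac{b^m}{x^m}=2^{-\alpha}(m\ln\frac{b}{a})^{\alpha}$. Plugging these together with $\lambda=1$ into the definition of $I_{f}\bigl((\sqrt{ab})^{m},1,\alpha,a^{m},b^{m}\bigr)$ collapses the first bracket to zero, leaves $\bigl[f(a^{m})+f(b^{m})\bigr]\cdot 2^{-\alpha}(m\ln\tfrac{b}{a})^{\alpha}$ in the second bracket, and keeps the fractional-integral term. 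Multiplying by $2^{\alpha-1}(m\ln\frac{b}{a})^{-\alpha}$ then yields exactly $\tfrac{f(a^{m})+f(b^{m})}{2}$ minus the Riemann--Liouville expression written in the corollary.

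Next I would handle the right-hand side. With $\lambda=1$ the constant simplifies to
\[
A_{1}(\alpha,1)=\frac{2\alpha+1}{\alpha+1}-1=\frac{\alpha}{\alpha+1},
\]
so that $A_{1}^{1-1/q}(\alpha,1)=\bigl(\tfrac{\alpha}{\alpha+1}\bigr)^{1-1/q}$. The prefactor of each summand in (\ref{2-4}) becomes $m^{\alpha+1}a^{m}\bigl(\tfrac{1}{2}\ln\tfrac{b}{a}\bigr)^{\alpha+1}$ and $m^{\alpha+1}b^{m}\bigl(\tfrac{1}{2}\ln\tfrac{b}{a}\bigr)^{\alpha+1}$. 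After multiplying through by $2^{\alpha-1}(m\ln\frac{b}{a})^{-\alpha}$, the coefficient simplifies as
\[
2^{\alpha-1}(m\ln\tfrac{b}{a})^{-\alpha}\cdot m^{\alpha+1}\cdot 2^{-(\alpha+1)}(\ln\tfrac{b}{a})^{\alpha+1}=\frac{m\ln\frac{b}{a}}{4},
\]
which matches the stated bound. The terms $C_{i}(\sqrt{ab},\alpha,1,q,m,s)$ are then just the four integrals defined in Theorem \ref{2.3} evaluated at the chosen parameters, so no further reduction is attempted.

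The only delicate point is keeping track of the cancellation $2^{\alpha-1}\cdot 2^{-(\alpha+1)}\cdot m^{\alpha+1}\cdot m^{-\alpha}\cdot(\ln\frac{b}{a})^{\alpha+1-\alpha}=\tfrac{m}{4}\ln\frac{b}{a}$, which is where a sign or exponent slip would most easily occur; once that is checked, the corollary follows immediately from Theorem \ref{2.3}.
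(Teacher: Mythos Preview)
Your proposal is correct and follows exactly the paper's approach: the corollary is stated in the paper without a separate proof, as an immediate specialization of Theorem~\ref{2.3} at $x=\sqrt{ab}$, $\lambda=1$, and your bookkeeping of the logarithmic factors, the value $A_{1}(\alpha,1)=\alpha/(\alpha+1)$, and the normalization by $2^{\alpha-1}(m\ln\tfrac{b}{a})^{-\alpha}$ is precisely what is needed. One small remark: in your final coefficient computation you silently dropped the factors $a^{m}$ and $b^{m}$ that appear in front of the two summands in (\ref{2-4}); the paper's stated corollary also omits them, so you match the statement as written, but strictly speaking those factors should survive the specialization.
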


\begin{corollary}
Let the assumptions of Theorem \ref{2.3} hold. If $\ \left\vert f^{\prime
}(u)\right\vert \leq M$ for all $u\in \left[ a^{m},b\right] $ and $\lambda
=0,$ then from the inequality (\ref{2-4}) we get the following Ostrowski
type inequality or fractional integrals%
\begin{eqnarray*}
&&\left\vert \left[ \left( \ln \frac{x}{a}\right) ^{\alpha }+\left( \ln 
\frac{b}{x}\right) ^{\alpha }\right] f(x^{m})-\frac{\Gamma \left( \alpha
+1\right) }{m^{\alpha }}\left[ J_{\left( m\ln x\right) ^{-}}^{\alpha }\left(
f\circ \exp \right) (m\ln a)+J_{\left( m\ln x\right) ^{+}}^{\alpha }\left(
f\circ \exp \right) (m\ln b)\right] \right\vert \\
&\leq &\frac{mM}{\left( \alpha +1\right) ^{1-\frac{1}{q}}}\left\{
a^{m}\left( \ln \frac{x}{a}\right) ^{\alpha +1}\left( C_{1}\left( x,\alpha
,0,q,m,s\right) +mC_{2}\left( x,\alpha ,0,q,m,s\right) \right) ^{\frac{1}{q}%
}\right. \\
&&\left. +b^{m}\left( \ln \frac{b}{x}\right) ^{\alpha +1}\left( C_{3}\left(
x,\alpha ,\lambda ,q,m,s\right) +mC_{4}\left( x,\alpha ,\lambda
,q,m,s\right) \right) ^{\frac{1}{q}}\right\}
\end{eqnarray*}%
for each $x\in \left[ a,b\right] .$
\end{corollary}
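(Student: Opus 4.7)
The plan is to derive the stated bound as a direct specialization of Theorem \ref{2.3}: substitute $\lambda =0$ into inequality (\ref{2-4}) and then apply the uniform bound $\left\vert f^{\prime }(u)\right\vert \leq M$ (for $u\in [a^{m},b]$) at the three relevant evaluation points $a$, $b$, and $x^{m}$.

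First I would simplify the outer prefactor. With $\lambda =0$,
\[
A_{1}(\alpha ,0)=\frac{2\alpha \cdot 0+1}{\alpha +1}-0=\frac{1}{\alpha +1},
\]
so $A_{1}^{1-\frac{1}{q}}(\alpha ,0)=\left( \frac{1}{\alpha +1}\right) ^{1-\frac{1}{q}}$. Next, the bound $\left\vert f^{\prime }\right\vert \leq M$ gives
\[
\left\vert f^{\prime }(x^{m})\right\vert ^{q}C_{1}(x,\alpha ,0,q,m,s)+m\left\vert f^{\prime }(a)\right\vert ^{q}C_{2}(x,\alpha ,0,q,m,s)\leq M^{q}\bigl(C_{1}(x,\alpha ,0,q,m,s)+mC_{2}(x,\alpha ,0,q,m,s)\bigr),
\]
and analogously for the second bracket involving $C_{3}$ and $C_{4}$. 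Taking $q$-th roots pulls out $M$ and leaves $\bigl(C_{1}+mC_{2}\bigr)^{1/q}$ and $\bigl(C_{3}+mC_{4}\bigr)^{1/q}$ (all evaluated at $\lambda =0$).

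For the left-hand side of (\ref{2-4}) I would substitute $x\mapsto x^{m}$, $a\mapsto a^{m}$, $b\mapsto b^{m}$ and $\lambda =0$ into the definition of $I_{f}$. Using $\ln \frac{x^{m}}{a^{m}}=m\ln \frac{x}{a}$, $\ln \frac{b^{m}}{x^{m}}=m\ln \frac{b}{x}$, $\ln a^{m}=m\ln a$, $\ln b^{m}=m\ln b$, $\ln x^{m}=m\ln x$, and observing that $\lambda =0$ annihilates the middle (endpoint-value) term, I get
\[
I_{f}(x^{m},0,\alpha ,a^{m},b^{m})=m^{\alpha }\bigl[\ln ^{\alpha }\tfrac{x}{a}+\ln ^{\alpha }\tfrac{b}{x}\bigr]f(x^{m})-\Gamma (\alpha +1)\bigl[J_{(m\ln x)^{-}}^{\alpha }(f\circ \exp )(m\ln a)+J_{(m\ln x)^{+}}^{\alpha }(f\circ \exp )(m\ln b)\bigr].
\]
Dividing both sides of the specialized estimate from (\ref{2-4}) by $m^{\alpha }$, and noting $m^{\alpha +1}/m^{\alpha }=m$, yields exactly the stated inequality.

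The argument is essentially bookkeeping rather than analysis; the only mild obstacle is tracking the $m^{\alpha }$ scaling that emerges from the homogeneity of the logarithm and the corresponding shift of the Riemann--Liouville endpoints, and recognizing the apparent typographical slip in the last displayed term of the corollary, where the arguments of $C_{3}$ and $C_{4}$ should read $\lambda =0$ to be consistent with the specialization.
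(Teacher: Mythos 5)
Your proposal is correct and follows exactly the route the paper intends: the corollary is stated as an immediate specialization of Theorem \ref{2.3} (the paper offers no separate proof), and your computation of $A_{1}(\alpha,0)=\tfrac{1}{\alpha+1}$, the uniform bound $M$ absorbed via $M^{q}$ inside the $q$-th roots, and the $m^{\alpha}$ rescaling of $I_{f}(x^{m},0,\alpha,a^{m},b^{m})$ all check out. Your observation that the arguments of $C_{3}$ and $C_{4}$ in the final display should read $0$ rather than $\lambda$ is also a correct identification of a typographical slip.
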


\end{document}